\documentclass[12pt,a4paper]{article}
\usepackage[T1]{fontenc}

\usepackage{float}
\usepackage{tikz-cd}
\usepackage{amsmath}%
\usepackage{amsfonts}%
\usepackage{amsthm}
\usepackage{amssymb}%
\usepackage{graphicx}
\usepackage{hyperref}

\usepackage{geometry}
\usepackage{layout}
\geometry{
  includeheadfoot,
  margin=2.45cm
}
\usepackage[utf8]{inputenc}
\usepackage{amsmath}
\usepackage{graphicx}
\usepackage{amsmath}
\usepackage{amssymb}
\usepackage{amsfonts}
\usepackage{graphicx}
\usepackage{url}
\usepackage{MnSymbol}
\usepackage{arydshln}
\usepackage{outlines}
\topmargin=-70pt
\usepackage{tikz-cd}

\usepackage{subfig}
\usepackage{amsthm}
\newcommand{\Fbar}{\overline{\mathbb{F}}}

\newcommand{\F}{\mathbb{F}}
\newcommand{\PGL}{\mathrm{PGL}}
\newcommand{\Z}{\mathbb{Z}}

\renewcommand{\P}{\mathbb{P}}

\newcommand{\Stab}{\mathrm{Stab}}
\newcommand{\Orbit}{\mathrm{Orbit}}

\newcommand{\Aut}{\mathrm{Aut}}

\newcommand{\Hyp}{\mathrm{Hyp}}
\newcommand{\Trig}{\mathrm{Trig}}

\newcommand{\ComInt}{\mathrm{ComInt}}

\usepackage{multirow}

\theoremstyle{plain}% default
\newtheorem{thm}{Theorem}[section]
\newtheorem{lem}[thm]{Lemma}

\theoremstyle{definition}

\newtheorem{exmp}{Example}[section]

\theoremstyle{remark}

\renewcommand{\>}{\right \rangle}
\usepackage{amsmath}

\usepackage{graphicx}

\title{Computing binary curves of genus five}
\author{Dušan Dragutinović
\\ Utrecht University\\
\textit{d.dragutinovic@uu.nl}}
\date{}

\begin{document}

\maketitle

\begin{abstract}
Genus 5 curves can be hyperelliptic, trigonal, or non-hyperelliptic non-trigonal, whose model is a complete intersection of three quadrics in $\P^4$. We present and explain algorithms we used to determine, up to isomorphism over $\F_2$, all genus 5 curves defined over $\F_2$, and we do that separately for each of the three mentioned types. We consider these curves in terms of isogeny classes over $\F_2$ of their Jacobians or their Newton polygons, and for each of the three types, we compute the number of curves over $\F_2$ weighted by the size of their $\F_2$-automorphism groups.   
\end{abstract}

\section{Introduction}
\label{Census_5}

A standard result %, that we can find in \cite{hag}, Section IV, 
is that the smooth curves of genus $5$ are either hyperelliptic, trigonal, or complete intersections of three quadric hypersurfaces in $\P^4$. Therefore, to understand the moduli space $\mathcal{M}_5$ of smooth curves of genus 5, we should understand the subvarieties parametrizing these three kinds of smooth curves. Denote with $\mathcal{H}_5$ the subvariety of $\mathcal{M}_5$ parametrizing hyperelliptic curves of genus 5, with $\mathcal{T}_5$ the subvariety parametrizing trigonal curves of genus 5, and lastly, let $\mathcal{U}_5$ be the subvariety parametrizing curves whose canonical model in $\P^4$ is a complete intersection of three quadric hypersurfaces. 
Let us write $\Hyp_g(\F_2)$ for the set of hyperelliptic curves of genus $g$ over $\F_2$ up to isomorphism \allowbreak (over $\F_2$),\allowbreak $\Trig_5(\F_2)$ for the set of trigonal curves of genus $5$ over $\F_2$ up to isomorphism (over $\F_2$), and $\ComInt_5(\F_2)$ for the set of curves of genus $5$ over $\F_2$ up to isomorphism (over $\F_2$) that are complete intersections of three quadric hypersurfaces in $\P^4$ in their canonical models. 

This paper aims to give algorithms for computing all $\F_2$-isomorphism classes of smooth curves of genus $5$ defined over $\F_2$, to present the obtained results, and to discuss some relevant questions, such as describing the isogeny types or Newton polygons of Jacobians of dimension $5$ over $\F_2$. We do that separately for $\Hyp_5(\F_2)$, $\Trig_5(\F_2)$, and $\ComInt_5(\F_2)$. Furthermore, we are interested in determining the automorphism groups over $\F_2$ for curves in the preceding three sets. These computations lead us to finding the moduli counts $|\mathcal{H}_5(\F_2)|, |\mathcal{T}_5(\F_2)|, |\mathcal{U}_5(\F_2)|$, and thus $|\mathcal{M}_5(\F_2)|$. Consequently, we get a piece of information about the cohomology of $\mathcal{M}_5$ and $\overline{\mathcal{M}}_5$.

In \cite{xarles}, Xarles determined all curves of genus $4$ defined over $\F_2$. His approach for computing the hyperelliptic curves is a universal one. We follow it closely and apply it to genus $5$ hyperelliptic curves. 

The algorithm for computing the representatives for the isomorphism classes of trigonal curves is based on the explicit description of their models in $\P^2$ and the idea of the exhaustion of all eligible equations respecting the isomorphisms; a stack count we obtained for this case matches the one by Wennink in \cite{wennink}, who used a partial sieve method for plane curves for that purpose. 

Lastly, for the non-hyperelliptic non-trigonal curves, a similar but more subtle idea of exhaustion of the eligible triples of quadratic polynomials in $\F_2[X, Y, Z, T, U]$ was used. We extensively explain the steps in our reasoning preceding the final algorithm we used for this problem and mention some intermediate steps and partial results.  
\\

\noindent
We implement all the algorithms and do the computations in the mathematical software \textsc{SageMath} and we collect the implementations and obtained results for non-hyperelliptic non-trigonal curves of genus 5 over $\F_2$ on 
\begin{center}
\url{https://github.com/DusanDragutinovic/Non-hyp-non-trig-genus-5-over-F_2}.
\end{center}
Codes and results for hyperelliptic and trigonal genus 5 curves over $\F_2$ can be found on 
\begin{center}
\url{https://github.com/DusanDragutinovic/MT_Curves}
\end{center}
and were made for the purpose of a master's thesis conducted at Utrecht University.

\subsection*{Acknowledgement}
I am grateful to my supervisor Carel Faber for pointing out to me the article by Xavier Xarles, which was a starting point for my master's thesis, all the discussions, and valuable help with completing the computations. 

Also, I would like to thank Lazar Mitrović and Miljan Zarubica for discussions regarding the technical aspects of the computations and for improving the execution time of the codes.
\newpage
\section{Hyperelliptic curves}
It is known that any hyperelliptic curve of genus $g$ over $\F_2$ can be represented in a standard (affine) equation
\begin{equation}
\label{eq:1}
 y^2 + q(x)y = p(x), \quad \text{for } p(x), q(x) \in \F_2[x], 
\end{equation}
with $2g + 1 \leq \max\{2 \deg(q(x)), \deg(p(x))\} \leq 2g + 2$.
\\

\noindent
In \cite{xarles}, Xarles gave the approach to compute all (smooth) curves of genus $4$ over $\F_2$ up to isomorphism. The given algorithm for determining the hyperelliptic curves over $\F_2$ can be generalized to higher genera, and here, we will use it to obtain the set $\Hyp_5(\F_2)$. Some of the claims made in \cite{xarles} we can use directly, while for the other ones, we will mention the analogs in the genus $5$ case. 
\\

\noindent Let $\F_2[x]_{n} = \{h(x)\in \F_2[x]: \deg(h(x))\leq n\}$ for $n \in \Z_{\geq 0}$, and for $A = \bigl(\begin{smallmatrix}
a & b\\ 
c & d
\end{smallmatrix}\bigr) \in \PGL_2(\F_2)$ and $q(x)\in \F_2[x]_{n}$, define an action of $\PGL_2(\F_2)$ on $\F_2[x]_{n}$ by $$\psi_n(A)(q(x)) = (cx + d)^{n}q\left (\frac{ax + b}{cx + d}\right );$$ we will also use the notation $A.q(x)$ for this. Further, denote the quotient set of $\F_2[x]_{n}$ under this action by $\overline{\F_2[x]_{n}} = \F_2[x]_{n}/\PGL_2(\F_2)$. \\

\noindent Let $H_1: y^2 + q_1(x)y = p_1(x)$ and $H_2: y^2 + q_2(x)y = p_2(x)$ be two hyperelliptic curves defined over $\F_2$, where it holds that $2g + 1 \leq \max\{2 \deg(q_i(x)), \deg(p_i(x))\} \leq 2g$ with $q_i(x)$ monic,  for $i \in \{1, 2\}$. Using that any isomorphism of such $H_1$ and $H_2$ has to be of the form $$(x, y)\mapsto \left (\frac{ax + b}{cx + d}, \frac{r(x) + y}{(cx + d)^{g + 1}}  \right )$$ for some $A = \bigl(\begin{smallmatrix}
a & b\\ 
c & d
\end{smallmatrix}\bigr) \in \PGL_2(\F_2)$ and $r(x)\in \F_2[x]_{g + 1}$, Xarles showed the following lemma. 

\begin{lem}[\cite{xarles}, Lemma 1]\label{lem:xarles_lem1}
Let $H_1$ and $H_2$ be as above and suppose $H_1\cong H_2$. Then there exists $A \in \PGL_2(\F_2)$ such that $q_2(x) = \psi_{g + 1}(A)(q_1(x))$. 
\end{lem}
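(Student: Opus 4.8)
The plan is to work directly with the explicit shape of the isomorphism recorded just above the statement, and to extract the claimed relation by comparing the two monic quadratic equations that $y$ satisfies over $\F_2(x)$. First I would take an isomorphism $\phi\colon H_1 \to H_2$ and write it, as in the cited normal form, as $(x,y)\mapsto (X,Y)$ with $X=\frac{ax+b}{cx+d}$ and $Y=\frac{r(x)+y}{(cx+d)^{g+1}}$ for some $A=\bigl(\begin{smallmatrix} a&b\\ c&d\end{smallmatrix}\bigr)\in\PGL_2(\F_2)$ and $r(x)\in\F_2[x]_{g+1}$. Since $\phi$ lands in $H_2$, the pair $(X,Y)$ satisfies $Y^2+q_2(X)Y=p_2(X)$. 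I would substitute and clear denominators by multiplying through by $(cx+d)^{2g+2}$. Because the hypothesis $\max\{2\deg q_2,\deg p_2\}\le 2g+2$ forces $\deg q_2\le g+1$, the two transformed terms are exactly $\psi_{g+1}(A)(q_2)(x)$ and $\psi_{2g+2}(A)(p_2)(x)$, so the relation becomes
\[
(r(x)+y)^2+\psi_{g+1}(A)(q_2)(x)\,(r(x)+y)=\psi_{2g+2}(A)(p_2)(x).
\]

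The key simplification is that we are in characteristic $2$, so $(r(x)+y)^2=r(x)^2+y^2$ with no cross term. Expanding and moving every summand not involving $y$ to the right, I obtain a relation of the form $y^2+\psi_{g+1}(A)(q_2)(x)\,y=\tilde p(x)$ holding identically on $H_1$, where $\tilde p(x)=\psi_{2g+2}(A)(p_2)(x)-r(x)^2-\psi_{g+1}(A)(q_2)(x)\,r(x)\in\F_2[x]$. Thus on $H_1$ the function $y$ satisfies two monic quadratic equations over $\F_2(x)$: its defining relation $y^2+q_1(x)y=p_1(x)$ and the one just derived.

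Now I would invoke uniqueness of the minimal polynomial. Since $H_1$ is genuinely hyperelliptic, $[\F_2(H_1):\F_2(x)]=2$, so $y\notin\F_2(x)$; subtracting the two monic quadratic relations would otherwise express $y$ rationally in $x$, a contradiction unless the two relations coincide. Comparing the coefficients of $y$ then yields $q_1(x)=\psi_{g+1}(A)(q_2)(x)$. Finally, since $\psi_{g+1}(A)$ is invertible with inverse $\psi_{g+1}(A^{-1})$ (immediate from the definition of the action, which gives $\psi_{g+1}(A)\circ\psi_{g+1}(A^{-1})=\mathrm{id}$), applying it produces $q_2(x)=\psi_{g+1}(A^{-1})(q_1(x))$; taking this $A^{-1}$, or equivalently running the same computation with $\phi^{-1}$ in place of $\phi$, proves the statement.

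I expect the only genuinely delicate points to be the justification that the two quadratic relations must agree as polynomials in $y$ rather than merely as functions on the curve, and the degree bookkeeping ensuring the transformed $q$-term lands in $\F_2[x]_{g+1}$ so that the $\psi_{g+1}$ notation applies verbatim; the characteristic-$2$ expansion that removes the cross term is what makes the coefficient comparison immediate, and everything else is routine substitution.
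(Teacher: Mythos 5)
Your proof is correct and takes essentially the approach the paper points to: the paper itself gives no argument for this lemma (it is cited from Xarles), but the normal form of isomorphisms stated immediately before it is exactly the input you use, and your substitution, characteristic-$2$ expansion, and uniqueness-of-the-minimal-polynomial comparison is the standard way to complete that route. The only cosmetic difference is that you first obtain $q_1(x) = \psi_{g+1}(A)(q_2(x))$ and then pass to $A^{-1}$, which is harmless since $\psi_{g+1}(A)$ is invertible with inverse $\psi_{g+1}(A^{-1})$.
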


\noindent
Using the lemma, when finding all non-isomorphic hyperelliptic curves $H: y^2  + q(x)y = p(x)$ of genus $5$ over $\F_2$, for a polynomial $q(x)$ it is enough to consider only elements of $\overline{\F_2[x]_{6}}$. %Alternatively, the map of sets $$\Hyp_g(\F_2)\to \overline{\F_2[x]_{g + 1}}$$ that sends any hyperelliptic curve written in form (\ref{eq:1}) to the class of $q(x)$ is a well-defined map. 

 For any $q(x)\in \overline{\F_2[x]_{g + 1}}$, let $\Stab(q(x))$ be the stabilizer of $q(x)$ under the $\PGL_2(\F_2)$-action. We cite two more results from \cite{xarles}.

\begin{lem}[\cite{xarles}, Lemma 4]\label{lem:xarles_lem2}
Let $H_1$ and $H_2$ be two hyperelliptic curves of genus $g$ over $\F_2$ given by standard equations \eqref{eq:1}: $y^2 + q(x) = p_i(x)$, $i\in \{1,2\}$. If $H_1$ and $H_2$ are isomorphic over $\F_2$, then there are $A \in \mathrm{Stab}(q(x))$ and $r(x)\in \F_2[x], \deg(r(x))\leq g + 1$ such that $$p_2(x) = \psi_{2g + 1}(p_1(x) + r(x)^2 + q(x)r(x)).$$
\end{lem}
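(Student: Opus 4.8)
The plan is to substitute the prescribed shape of the isomorphism into the equation of $H_2$, clear denominators, and compare the result with the equation of $H_1$. First I would invoke the fact recalled just before Lemma~\ref{lem:xarles_lem1}, namely that any isomorphism $H_1\cong H_2$ of curves in standard form is
\[
(x,y)\longmapsto\left(\frac{ax+b}{cx+d},\ \frac{r(x)+y}{(cx+d)^{g+1}}\right)
\]
for some $A=\bigl(\begin{smallmatrix}a&b\\ c&d\end{smallmatrix}\bigr)\in\PGL_2(\F_2)$ and $r(x)\in\F_2[x]_{g+1}$. Since here $q_1=q_2=q$, Lemma~\ref{lem:xarles_lem1} applied to this isomorphism yields $\psi_{g+1}(A)(q)=q$, i.e. the Möbius part $A$ already lies in $\Stab(q)$. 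This settles the ``$A\in\Stab(q)$'' assertion before any computation involving $p$.

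Next I would carry out the substitution. Writing $X=\frac{ax+b}{cx+d}$ and $Y=\frac{r(x)+y}{(cx+d)^{g+1}}$ and imposing $Y^2+q(X)Y=p_2(X)$, I would multiply through by $(cx+d)^{2g+2}$, the degree to which $y^2$ is homogenised. In characteristic $2$ one has $(r+y)^2=r(x)^2+y^2$; the mixed term collapses via $(cx+d)^{g+1}q(X)=\psi_{g+1}(A)(q)=q$ (this is exactly where $A\in\Stab(q)$ enters), producing $q(x)\bigl(r(x)+y\bigr)$; and the right-hand side becomes $\psi_{2g+2}(A)(p_2)$ because $\deg p_2\le 2g+2$. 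Collecting terms, the transformed equation reads $y^2+q(x)y=\psi_{2g+2}(A)(p_2)+r(x)^2+q(x)r(x)$. Since the isomorphism carries $H_2$ to $H_1$, this must be a nonzero scalar multiple of $y^2+q(x)y=p_1(x)$; both sides being monic in $y^2$ forces the scalar to be $1$, and comparing the terms free of $y$ gives $p_1=\psi_{2g+2}(A)(p_2)+r(x)^2+q(x)r(x)$.

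Finally I would invert this relation. As $\psi_{2g+2}$ is a $\PGL_2(\F_2)$-action, $\psi_{2g+2}(A)$ is invertible with inverse $\psi_{2g+2}(A^{-1})$, and $\Stab(q)$ is a subgroup, so $A^{-1}\in\Stab(q)$ as well. Applying $\psi_{2g+2}(A^{-1})$ to the displayed identity and relabelling $A^{-1}$ as $A$ produces $p_2=\psi_{2g+2}(A)\bigl(p_1+r(x)^2+q(x)r(x)\bigr)$, the asserted form; note that $\deg r\le g+1$ forces $\deg(r^2+qr)\le 2g+2$, so the argument genuinely lies in the space $\F_2[x]_{2g+2}$ on which $\psi_{2g+2}$ acts.

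I expect the only genuine subtlety to be the weight bookkeeping: the equation must be homogenised to the degree of $y^2$, namely $2g+2$, so that $y^2$, $q(x)y$ and $p(x)$ all carry the same weight and so that $\psi$ acts throughout with that weight while preserving the degree bound on $p$. The operation $r\mapsto r^2+qr$ is nonlinear, but since I only ever move it across the characteristic-$2$ addition and never push $\psi$ through it, the inversion step stays inside the linear action and causes no trouble; likewise the passage from ``$p_1$ in terms of $p_2$'' to the stated ``$p_2$ in terms of $p_1$'' is harmless once $\Stab(q)$ is known to be closed under inverses.
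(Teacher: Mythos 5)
Your proof is correct and is essentially the argument behind the cited result: the paper itself states this lemma without proof (quoting Xarles), but the substitute-clear-denominators-and-compare computation you give is exactly the derivation the paper alludes to before Lemma~\ref{lem:xarles_lem1} and reuses in Step~3 of Algorithm~1, including the use of $\Stab(q(x))$ being closed under inversion to pass from ``$p_1$ in terms of $p_2$'' to the stated form. Your weight bookkeeping also silently corrects a typo in the printed statement: the action must be $\psi_{2g+2}(A)$ (homogenising to the degree of $y^2$, consistent with the exponent $12 = 2\cdot 5+2$ in Algorithm~1), whereas the statement writes $\psi_{2g+1}$ and omits the matrix argument.
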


\begin{lem}[\cite{xarles}, Lemma 5]\label{lem:xarles_lem3}
Let $g \in \Z_{\geq 2}$. Given a nonzero polynomial $q(x)\in \F_2[x]$ and a polynomial $\allowbreak {p(x)\in \F_2[x] }\allowbreak$ with $2g + 1 \leq \max\{2\deg(q(x)), \deg(p(x))\}\leq 2g + 2$, the equation $\allowbreak {y^2 + q(x)y = p(x) }\allowbreak$ defines a hyperelliptic curve of genus $g$ if and only if $$\gcd(q(x), p'(x)^2 + q'(x)^2p(x)) = 1, $$ and either $\deg(q(x)) = g + 1$ or $a^2_{2g + 1} \neq a_{2g + 2}b_g^2$, where $p(x) = \sum_{i = 0}^{2g + 2}a_ix^i$ and $q(x) = \sum_{i = 0}^{g + 1}b_ix^i$. 
\end{lem}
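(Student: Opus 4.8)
The plan is to work over $\Fbar_2$ and apply the Jacobian criterion to the curve, treating the affine chart and the chart at infinity separately; the two displayed conditions will turn out to govern smoothness in these two charts respectively, after which the degree bounds pin down the genus. Throughout I would use freely that over $\Fbar_2$ the squaring map is a bijection, so square roots exist and are unique.

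First, in the affine chart set $f(x,y) = y^2 + q(x)y + p(x)$ (recall $-1 = 1$ in characteristic $2$). The partial derivatives are $f_y = q(x)$ and $f_x = q'(x)y + p'(x)$, so a point $(x_0, y_0)$ on the affine curve is singular exactly when $q(x_0) = 0$ and $q'(x_0)y_0 + p'(x_0) = 0$; on the curve with $q(x_0)=0$ one also has $y_0^2 = p(x_0)$. I would eliminate $y_0$: squaring $q'(x_0)y_0 = p'(x_0)$ and substituting $y_0^2 = p(x_0)$ gives $p'(x_0)^2 + q'(x_0)^2 p(x_0) = 0$, and conversely this identity together with $q(x_0)=0$ produces a genuine singular point by taking $y_0 = \sqrt{p(x_0)}$ and using injectivity of squaring. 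Hence the affine curve is smooth if and only if $q$ and $p'^2 + q'^2 p$ share no common root, i.e. $\gcd(q(x), p'(x)^2 + q'(x)^2 p(x)) = 1$.

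Next I would analyse infinity via the substitution $u = 1/x$, $v = y/x^{g+1}$, which (because $\deg q \le g+1$ and $\deg p \le 2g+2$) turns the equation into $v^2 + \tilde q(u) v = \tilde p(u)$ with $\tilde q(u) = u^{g+1}q(1/u)$ and $\tilde p(u) = u^{2g+2} p(1/u)$ genuine polynomials satisfying $\tilde q(0) = b_{g+1}$, $\tilde p(0) = a_{2g+2}$, $\tilde q'(0) = b_g$, $\tilde p'(0) = a_{2g+1}$. The points at infinity are those with $u=0$, and here the stated dichotomy appears. If $\deg q = g+1$, then the partial derivative in $v$ at such a point equals $\tilde q(0) = b_{g+1} \neq 0$, so the model is automatically smooth at infinity. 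If instead $\deg q < g+1$, then $b_{g+1}=0$, there is a unique point at infinity $(0, v_0)$ with $v_0 = \sqrt{a_{2g+2}}$, and the same computation as in the affine case shows it is singular exactly when $b_g v_0 + a_{2g+1} = 0$, which after squaring reads $a_{2g+1}^2 = a_{2g+2} b_g^2$. Thus smoothness at infinity is equivalent to $\deg q = g+1$ or $a_{2g+1}^2 \neq a_{2g+2} b_g^2$, matching the second condition.

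Finally, granting smoothness, I would verify that $C$ is geometrically connected of genus exactly $g$, and this is where I expect the main obstacle to lie. The plan is to apply Riemann–Hurwitz to the degree-two map $x \colon C \to \P^1$, which is separable because $q \neq 0$; the degree bounds force ramification (at the zeros of $q$ or at infinity), giving connectedness, and in characteristic $2$ this is a wildly ramified Artin–Schreier cover, so I would read off the different from the pole orders of $p/q^2$ at the branch points. The upper bound $\max\{2\deg q, \deg p\} \le 2g+2$ should rule out a genus exceeding $g$ and the lower bound $\ge 2g+1$ a smaller one. Assembling the two charts then shows that $C$ is a smooth projective hyperelliptic curve of genus $g$ precisely when both displayed conditions hold, giving both directions of the equivalence. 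The delicate points to get right are the correct weighted chart at infinity and the careful bookkeeping of the wild ramification in the genus count, since the smoothness statements themselves are essentially mechanical applications of the Jacobian criterion.
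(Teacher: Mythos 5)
The paper itself contains no proof of this lemma: it is imported, with $4$ replaced by a general $g$, from Xarles's genus-$4$ census (Lemma~5 of that paper), so there is no internal argument to compare yours against; I can only measure your proposal against the standard proof, which is essentially the route you take. Your two-chart Jacobian-criterion analysis is correct and complete: in the affine chart, eliminating $y_0$ by squaring (using that Frobenius is bijective on $\Fbar_2$) correctly converts smoothness into $\gcd(q, p'^2 + q'^2 p) = 1$, and your chart at infinity $(u,v) = (1/x,\, y/x^{g+1})$, with $\tilde q(u) = u^{g+1}q(1/u)$ and $\tilde p(u) = u^{2g+2}p(1/u)$, is the correct gluing and yields exactly the stated dichotomy $\deg q = g+1$ or $a_{2g+1}^2 \neq a_{2g+2}b_g^2$.

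The one step you leave as a plan --- that smoothness forces the curve to be geometrically connected of genus exactly $g$ --- is the only missing piece, and it is less delicate than you fear: no Artin--Schreier pole-order bookkeeping for $p/q^2$ is needed. Since the affine curve is smooth, the ring $\Fbar_2[x,y]/(y^2+qy+p)$ is integrally closed and monogenic over $\Fbar_2[x]$, so the different is generated by $\partial_y(y^2+qy+p) = q$, contributing degree $2\deg q$; in the other chart the different above $u=0$ is generated by $\tilde q$, contributing $2(g+1-\deg q)$ since the point over $u = 0$ (when $\deg q < g+1$) is ramified and $\tilde q$ vanishes there to order $g+1-\deg q$ in $u$. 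The total different degree is therefore always $2g+2$, and Riemann--Hurwitz for the separable degree-$2$ map $x \colon C \to \P^1$ gives $2g_C - 2 = -4 + (2g+2)$, i.e.\ $g_C = g$. Note that this uses only the upper degree bounds; the lower bound $\geq 2g+1$ is not a separate input to the genus count but is forced by your smoothness condition at infinity when $\deg q \leq g$. Connectedness follows as you indicate: the cover is always ramified somewhere (at zeros of $q$ if $\deg q \geq 1$, at the unique point over $u=0$ if $q$ is constant, since then $\tilde q(0)=0$), and a disconnected degree-$2$ cover would be everywhere unramified. With that paragraph supplied, your proof is complete and agrees in substance with the cited source.
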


\noindent The idea of determining $\Hyp_5(\F_2)$ is as follows. Initially, we should check for a pair of $p(x), q(x) \in \F_2[x]$  with $$11 \leq \max\{2 \deg(q(x)), \deg(p(x))\} \leq 12$$ whether $y^2 + q(x)y = p(x)$ defines a hyperelliptic curve of genus $5$ over $\F_2$. Lemma \ref{lem:xarles_lem1} reduces that, by considering some smaller set of possible $q(x)$'s, namely, only the set of representatives for $\F_2[x]_{6}$ for the $\PGL_2(\F_2)$-action, called $Q_5(\F_2)$. In other words, $Q_5(\F_2)$ is the set of representatives of elements in $\overline{\F_2[x]_{6}}$. Then, using Lemma \ref{lem:xarles_lem2}, for fixed $q(x)\in Q_5(\F_2)$, we can reduce the list of possible polynomials $p(x)$, and finally, Lemma \ref{lem:xarles_lem3} helps us to decide whether such pairs $(q(x), p(x))$ define hyperelliptic curves of genus $5$. Therefore, it is only left to determine $Q_5(\F_2)$. We do that below using the same ideas as in \cite{xarles}, Lemma 2. 

\begin{lem}\label{lem:xarles_help}
For $q(x)\in \F_2[x]_6$ and $\mathcal{Z}'(q(x)) = \{P \in \Fbar_2: q(P) = 0\}$, let $$D_{q(x)} = \mathcal{Z}'(q(x)) + (6 - \deg(q(x)))\cdot \infty$$ be the  zero divisor of $q(x)$ in $\P^1$. Then the action of $\PGL_2(\F_2)$ on $\F_2[x]_6$ naturally translates to the (standard) action of $\PGL_2(\F_2)$ on $\mathrm{Div}_{6}(\F_2)$, and these actions are compatible, i.e., $D_{A.q(x)} = A.D_{q(x)}$.
\end{lem}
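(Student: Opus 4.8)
The plan is to pass from the affine polynomial $q(x)$ to its associated binary form of degree $6$ and to reinterpret both actions as acting on these forms, where the point at infinity loses its special status. Writing $q(x) = \sum_{i=0}^{6} b_i x^i \in \F_2[x]_6$, I would set $Q(X,Y) = Y^6 q(X/Y) = \sum_{i=0}^{6} b_i X^i Y^{6-i}$, a binary form of degree $6$. Identifying the affine line with the chart $Y \neq 0$ of $\P^1$ via $x = X/Y$ and the point at infinity with $[1:0]$, the divisor of zeros of $Q$ on $\P^1$ is exactly $\mathcal{Z}'(q(x)) + (6 - \deg q(x))\cdot \infty = D_{q(x)}$: the finite roots of $q$, counted with multiplicity, account for $\mathcal{Z}'(q(x))$, while the factor $Y^{6-\deg q(x)}$ of $Q$ contributes the point $\infty = [1:0]$ with multiplicity $6 - \deg q(x)$. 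In particular $\deg D_{q(x)} = \deg q(x) + (6 - \deg q(x)) = 6$, so $D_{q(x)} \in \Div_6(\F_2)$, and this homogenization makes the drop of degree of $q$ correspond uniformly to the vanishing order of $Q$ along $Y$.

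Next I would compute the effect of $\psi_6(A)$ after homogenizing. For $A = \bigl(\begin{smallmatrix} a & b \\ c & d \end{smallmatrix}\bigr)$, a direct substitution gives
$$\widetilde{Q}(X,Y) \;=\; Y^6\,\bigl(\psi_6(A)(q)\bigr)(X/Y) \;=\; (cX+dY)^6\, q\!\left(\frac{aX+bY}{cX+dY}\right) \;=\; Q(aX+bY,\;cX+dY),$$
so on the level of binary sextics the action $\psi_6(A)$ is nothing but the linear substitution $(X,Y)\mapsto(aX+bY,cX+dY)$ determined by $A$. Because $A$ is represented by an invertible matrix (over $\F_2$ every element of $\PGL_2(\F_2)$ is), this substitution is an invertible linear isomorphism of the space of binary sextics, so it carries the zero divisor of $Q$ to the projective $A$-translate of that divisor while preserving every multiplicity. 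Comparing this with the definition of the standard action of $\PGL_2(\F_2)$ on the points of $\P^1$ and its linear extension to $\Div_6(\F_2)$ then yields $D_{A.q(x)} = A.D_{q(x)}$, which is the asserted compatibility.

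The only genuine subtlety, and the step I would treat most carefully, is the bookkeeping at infinity together with the orientation of the action. The projective transformation attached to $A$ mixes finite points with $\infty$: a finite root can be carried to $\infty$ and the root at infinity to a finite point, so a direct root-by-root argument would have to split into cases according to whether $c\beta+d$ vanishes. Homogenization is precisely what removes this case distinction, since the linear substitution acts as an isomorphism on the seven-dimensional space of binary sextics and automatically tracks the order of vanishing along $Y$. One must also fix, once and for all, the convention for the standard action of $\PGL_2(\F_2)$ on $\P^1$ (equivalently, whether $A$ acts by $z\mapsto\frac{az+b}{cz+d}$ or by its inverse) so that it is consistent with the composition law satisfied by $\psi_6$; taking the action on $\Div_6(\F_2)$ to be the one induced by this same convention makes the identity $D_{A.q(x)} = A.D_{q(x)}$ hold exactly, and the degree computation above confirms that everything remains inside $\Div_6(\F_2)$.
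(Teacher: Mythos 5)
Your proof is correct, but it follows a genuinely different route from the paper's. The paper works entirely with the inhomogeneous polynomial: it expands $A.q(x)=\sum_i e_i(ax+b)^i(cx+d)^{6-i}$ and verifies the equality of divisors pointwise, in three separate cases --- the point $P=d/c$ (shown to occur in $\mathcal{Z}'(A.q(x))$ with multiplicity exactly $6-\deg q(x)$, matching the multiplicity of $\infty$ in $D_{q(x)}$), the point $\infty$ itself (treated by applying $A^{-1}$ and exchanging the roles of $q(x)$ and $A.q(x)$), and the remaining finite points, where membership in $\mathcal{Z}'$ and multiplicities are compared directly. Your homogenization to binary sextics eliminates precisely this case analysis: the identity $Y^6\bigl(\psi_6(A)(q)\bigr)(X/Y)=Q(aX+bY,\,cX+dY)$ turns $\psi_6$ into a linear substitution on forms, under which zero divisors and their multiplicities transform functorially, with $[1:0]$ on the same footing as every other point of $\P^1$. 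What the paper's argument buys is elementary explicitness, never leaving $\F_2[x]_6$; what yours buys is uniformity and a conceptual reason why the bookkeeping at infinity must come out right. Your closing remark on orientation is also warranted rather than pedantic: with the paper's conventions, $\psi_6$ is a right action (one checks $\psi_6(A)\circ\psi_6(B)=\psi_6(BA)$) while $P\mapsto\frac{aP+b}{cP+d}$ is a left action, so the literal identity is $D_{A.q(x)}=A^{-1}.D_{q(x)}$ unless one defines the translate of a divisor by pullback, exactly as you prescribe. The paper glosses over this point; it is harmless for the intended application (identifying $\overline{\F_2[x]_6}$ with $\Div_6(\F_2)/\PGL_2(\F_2)$), since an action and its inverse-twisted action have the same orbits, but your explicit treatment is the more careful one.
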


\begin{proof}
For an arbitrary polynomial $q(x) = e_6x^6 + e_5x^5 + \ldots + e_1x + e_0 \in \F_2[x]_6$ and a matrix $A = \bigl(\begin{smallmatrix}
a & b\\ 
c & d
\end{smallmatrix}\bigr) \in \PGL_2(\F_2)$ we compute $$q_{new}(x) = A.q(x) = e_6(ax + b)^6 + e_5(ax + b)^5(cx + d) + \ldots + e_1(ax + b)(cx + d)^5 + e_0(cx + d)^6.$$ For $P = d/c, c\neq 0$, we see that $P \in \mathcal{Z}'(A.q(x))$ if and only if $\deg(q(x))<6$, and moreover, its multiplicity as a zero of $A.q(x)$ is precisely $6 - \deg(q(x))$; this means that the multiplicity of $P = d/c$ in $D_{A.q(x)}$ is the same as the multiplicity of $\infty$ in $D_{q(x)}$. Using $A^{-1}$ and changing the roles of $q(x)$ and $q_{new}(x)$ we can similarly get the conclusion on the degree of $q_{new}(x)$ when inspecting $P = \infty$. For other $P \in \Fbar_2$, we see $P \in \mathcal{Z}'(q(x))$ if and only if $\frac{aP + b}{cP + d} \in \mathcal{Z}'(q(x))$ and the corresponding multiplicities match. From these explicit relations, we see that the asserted claim holds. 
\end{proof}

\noindent
The previous lemma implies that determining $\overline{\F_2[x]_6}$ (and hence $Q_5(\F_2)$) is the same as determining $\text{Div}_{6}(\F_2)/\PGL_2(\F_2)$. Note further that if $P$ is a $K$-point, for $K/\F_2$ some finite extension, and $A = \bigl(\begin{smallmatrix}
a & b\\ 
c & d
\end{smallmatrix}\bigr) \in \PGL_2(\F_2)$, then $A.P = \frac{aP + b}{cP + d}$ is again a $K$-point ($\infty$ is a $\F_2$-point).

\begin{thm} The set $Q_5(\F_2)$ consists of the following elements $q(x)\in \F_2[x]$: 
\begin{itemize}
    \item[]$\deg(q(x))\leq 2$:  $\quad 1, x, x^2, x(x + 1), x^2 + x + 1$
    \item[]$\deg(q(x)) =  3$:  $\quad x^3, x^2(x + 1), (x^2 + x + 1)x, x^3 + x + 1$
    \item[]$\deg(q(x)) =  4$: $\quad x^2(x + 1)^2, (x^2 + x + 1)^2, (x^2 + x + 1)x^2, (x^2 + x + 1)x(x + 1), (x^3 + x + 1)x,\allowbreak (x^3 + x^2 + 1)x, x^4 + x + 1, x^4 + x^3 + 1$ 
    \item[]$\deg(q(x)) =  5$:  $\quad (x^2 + x + 1)^2x, (x^3 + x + 1)(x^2 + x + 1), (x^3 + x + 1)x(x+1), (x^4 + x + 1)x,\allowbreak (x^4 + x^3 + x^2 + x + 1)x, x^5 + x^2 + 1, x^5 + x^3 + 1, x^5 + x^3 + x^2 + x + 1$
    \item[]$\deg(q(x)) =  6$:  $\quad (x^2 + x + 1)^3, (x^3 + x + 1)^2, (x^3 + x + 1)(x^3 + x^2 + 1), (x^4 + x + 1)(x^2 + x + 1),\allowbreak x^6 + x + 1, x^6 + x^3 + 1$. 
\end{itemize}
\end{thm}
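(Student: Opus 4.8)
The plan is to use Lemma~\ref{lem:xarles_help} to convert the statement into a classification of effective divisors: determining $Q_5(\F_2)$ is the same as listing the orbits of $\PGL_2(\F_2)$ acting on $\Div_6(\F_2)$, the effective divisors of degree $6$ on $\P^1$ defined over $\F_2$. First I would record the two pieces of standing data. On one hand, $\PGL_2(\F_2)$ has order $6$ and acts on the three rational points $\P^1(\F_2) = \{0, 1, \infty\}$ as the full symmetric group $S_3$; I will use the generators $x \mapsto x+1$ (fixing $\infty$, transposing $0,1$) and $x \mapsto 1/x$ (fixing $1$, transposing $0, \infty$). On the other hand, the closed points of $\P^1_{\F_2}$ are catalogued by degree: there are $3$ of degree $1$, one of degree $2$ (cut out by $x^2+x+1$), $2$ of degree $3$, $3$ of degree $4$, $6$ of degree $5$, and $9$ of degree $6$; for $d\ge 2$ these correspond to the monic irreducible polynomials of degree $d$ over $\F_2$, while the degree-one points are $0,1,\infty$.

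I would then organize the enumeration by the \emph{degree profile} of a divisor $D = \sum_j m_j P_j$, namely the multiset of pairs $(\deg P_j, m_j)$, subject to $\sum_j m_j \deg P_j = 6$ and to using no more points of a given degree than exist. There are only finitely many such profiles, and they are exactly what the statement groups under the headings $\deg q \le 2,3,4,5,6$ (the degree of $q$ being $6$ minus the multiplicity placed at $\infty$). For each profile the reduction proceeds in two stages: first use the $S_3$-action on $\{0,1,\infty\}$ to normalize the rational part of the support (for instance moving a highest-multiplicity rational point to $\infty$ and fixing the positions of the remaining low-degree points), and then let the \emph{residual stabilizer} of that normalized rational part act on the higher-degree points of the support, splitting them into orbits.

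The heart of the argument — and the step I expect to be the main obstacle — is this last bookkeeping: the orbit of $D$ is \emph{not} determined by its degree profile alone, but by the induced action of the residual stabilizer on the higher-degree points, and this stabilizer shrinks or grows with the rest of the divisor. Concretely, I would compute the action on closed points explicitly: $x\mapsto x+1$ transposes the two cubic points $x^3+x+1 \leftrightarrow x^3+x^2+1$; the full group permutes the three quartic points $x^4+x+1,\ x^4+x^3+1,\ x^4+x^3+x^2+x+1$ transitively, whereas the order-$2$ stabilizer of $\infty$ fixes $x^4+x+1$ and swaps the other two, and the order-$2$ stabilizer of $\{0,\infty\}$ swaps $x^4+x+1 \leftrightarrow x^4+x^3+1$ and fixes $x^4+x^3+x^2+x+1$. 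This is exactly why the profile ``one quartic point $+\,2\infty$'' contributes the two representatives $x^4+x+1$ and $x^4+x^3+1$, while the profile ``one quartic point $+[0]+\infty$'' instead contributes $(x^4+x+1)x$ and $(x^4+x^3+x^2+x+1)x$; similarly the quadratic point is $S_3$-fixed, so ``quartic $+$ quadratic'' gives the single class $(x^4+x+1)(x^2+x+1)$. The analogous transitivity and stabilizer computations for the $6$ quintic and $9$ sextic points finish the profiles whose mass is concentrated in a single high-degree point.

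To confirm that the resulting list is both complete and irredundant, I would cross-check the orbit count by Burnside's lemma. The number of effective degree-$6$ divisors fixed by $A\in\PGL_2(\F_2)$ equals the number of nonzero $q\in\F_2[x]_6$ with $A.q=q$: for the identity this is $2^7-1=127$; for a transposition such as $x\mapsto x+1$ the invariants are the polynomials in $x^2+x$ of degree $\le 6$, giving $2^4-1=15$; and for a $3$-cycle a short eigenvalue computation on binary sextics gives $2^3-1=7$. Hence the number of orbits is $\tfrac{1}{6}\bigl(127 + 3\cdot 15 + 2\cdot 7\bigr) = 31$, matching the number of representatives listed in the statement and confirming that the case analysis has neither missed nor duplicated a class.
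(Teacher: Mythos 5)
Your proof follows essentially the same route as the paper's: reduce via Lemma~\ref{lem:xarles_help} to classifying $\PGL_2(\F_2)$-orbits of effective degree-$6$ divisors on $\P^1$, organize the enumeration by the degree profile of the support, normalize the rational part using the $S_3$-action on $\{0,1,\infty\}$, and let the residual stabilizer act on the higher-degree closed points; your explicit quartic bookkeeping (which transposition fixes which quartic point, distinguishing the profiles $\zeta_4+2\infty$ and $\zeta_4+0+\infty$) is exactly what the paper compresses into ``discussing the $\PGL_2(\F_2)$ action \dots we extract.'' The one genuine addition is your Burnside cross-check: the fixed-point counts $127$, $15$, $7$ for the identity, the transpositions, and the $3$-cycles are all correct, giving $(127+3\cdot 15+2\cdot 7)/6=31$ orbits, matching the $31$ listed representatives; the paper performs no such verification. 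This check has real value here, since it certifies completeness and irredundancy of the list in aggregate and would catch exactly the kind of slip the paper's proof text actually contains: there the quintic case lists $x^5+x+1$ as an irreducible representative, but $x^5+x+1=(x^2+x+1)(x^3+x^2+1)$ is reducible (the theorem statement correctly has $x^5+x^2+1$). The only point left implicit on your side is the orbit computation for the quintic and sextic profiles, deferred as ``analogous''; it does need to be carried out (three $\Stab(\infty)$-orbits among the six irreducible quintics under $x\mapsto x+1$, and two $\PGL_2(\F_2)$-orbits among the nine irreducible sextics) to justify the specific representatives, but the method you describe handles it routinely, and your orbit count then confirms that nothing was missed or double-counted.
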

\begin{proof}
For $q(x)\in \F_2[x]_6$, let $D_{q(x)}$ be as in Lemma \ref{lem:xarles_help}. As we mentioned above, in order to find $Q_5(\F_2)$, we will firstly determine $\mathrm{Div}_{6}(\F_2)/\PGL_2(\F_2)$. We use the well-known fact that given any three $\F_2$-points $p_{\infty}, p_{0}, p_1$ there is a (unique) projective automorphism $A\in \PGL_2(\F_2)$ that sends $p_{\infty}\mapsto \infty, p_0\mapsto 0, p_{1}\mapsto 1$. %In particular, that gives that any two $\F_2$-points can be mapped via projective automorphism to $\infty, 0$, and similarly, any point can be mapped to $\infty$. 

Firstly, any $D_{q(x)}$ that consists only of $\F_2$-points in $\text{Div}_{6}(\F_2)/\PGL_2(\F_2)$ is equal to the unique one $n_{\infty}\cdot\infty + n_0\cdot 0 + n_1\cdot 1$ with $n_{1}\leq n_0 \leq n_{\infty}$. Since $\deg(D_{q(x)}) = 6$, we get that all the possible triples $(n_{\infty}, n_0, n_1)$ are $$\{(6, 0, 0), (5, 1, 0), (4, 2, 0), (4, 1, 1),(3, 3, 0), (3, 2, 1), (2, 2, 2)\}.$$ Using the correspondence from Lemma \ref{lem:xarles_help}, this gives us the subset of polynomials $q(x)$ in $Q_5(\F_2)$, $$\{1, x, x^2, x(x + 1), x^3, x^2(x + 1), x^2(x + 1)^2\}.$$ 

If $D_{q(x)}$ contains only one point of degree $2$ and no other points of degree $\geq 2$ in its support, similarly as above, we get that $D_{q(x)}$ is equal to one of $$3\zeta_2,\text{ } 2\zeta_2 + 2\infty,\text{ } 2\zeta_2 + \infty + 0,\text{ } \zeta_2 + 4\infty,\text{ } \zeta_2 + 3 \infty +  0,\text{ } \zeta_2 + 2\infty + 2\cdot 0,\text{ }  \zeta_2 + 2\infty + 0 +  1.$$ This induces the set of polynomials in $Q_5(\F_2)$ (we use $\zeta_n$ as notation for any $\zeta_n \in \Fbar_2$ of degree $n$ over $\F_2$): $$\{(x^2 + x + 1)^3, (x^2 + x + 1)^2, (x^2 + x + 1)^2x, x^2 + x + 1, (x^2 + x + 1)x, (x^2 + x + 1)x^2, (x^2 + x + 1)x(x + 1)\}, $$ 

If $D_{q(x)}$ contains a point of degree $3$, then the possibilities are the following $$D_{q(x)}\in \{2\zeta_3, \zeta_{3} + \zeta_{3}', \zeta_3 +  \zeta_2 + \infty, \zeta_3 + 3\infty, \zeta_3 + 2\infty + 0, \zeta_3 + \infty +  0 + 1\}, $$ where $\zeta_3, \zeta_3'$ are of degree $3$. Since $x\mapsto x + 1$, which is induced by the action of $A =  \bigl(\begin{smallmatrix}
1 & 1\\ 
0 & 1
\end{smallmatrix}\bigr)$, translates $q_1(x) = x^3 + x + 1$ to $q_2(x) = x^3 + x^2 + 1$, we have that $D_{q_1(x)}$ and $D_{q_2(x)}$ are the same in  $\text{Div}_{6}(\F_2)/\PGL_2(\F_2)$, and that $D_{(q_1(x))^2}, D_{(q_1(x))(x^2 + x + 1)}, D_{(q_1(x))x(x + 1)}$ are the same as $D_{(q_2(x))^2},\allowbreak D_{(q_2(x))(x^2 + x + 1)}, D_{(q_2(x))x(x + 1)}$. Therefore, this case gives us the new list of possible polynomials $q(x)$: $\{(x^3 + x + 1)^2, \allowbreak(x^3 + x + 1)(x^3 + x^2 + 1),\allowbreak (x^3 + x + 1)(x^2 + x + 1),\allowbreak x^3 + x + 1, (x^3 + x + 1)x,$ $(x^3 + x^2 + 1)x, (x^3 + x + 1)x(x + 1)\}.$

In the case when $D_{q(x)}$ contains a point of degree $4$, it should be either $\zeta_4 + \zeta_2, \zeta_4 + 2\infty$ or $\zeta_4 + \infty + 0$. There are three irreducible polynomials over $\F_2$ of degree $4$, so out of all possible combinations, discussing the $\PGL_2(\F_2)$ action on $\F_2[x]_6$ as above, we extract the following list of representatives for $q(x)$: $$\{(x^4 + x + 1)(x^2 + x + 1), x^4 + x + 1, x^4 + x^3 + 1, (x^4 + x + 1)x, (x^4 + x^3 + x^2 + x + 1)x\}.$$

When $D_{q(x)}$ contains a point of degree $5$, there is only one possibility for the form of $D_{q(x)}$, namely $D_{q(x)} = \zeta_5 + \infty$. Among the six irreducible polynomials of degree $5$, we found that for example, the following three are representatives of $q(x)$ for the considered action: $$\{x^5 + x + 1, x^5 + x^3 + 1, x^5 + x^3 + x^2 + x + 1\}.$$

Lastly, among the nine irreducible polynomials of degree $6$, we found two, namely $x^6 + x + 1, x^6 + x^3 + 1$, so that acting via $\PGL_2(\F_2)$ on them we can get all the others. This corresponds to a choice of the divisor $D_{q(x)} = \zeta_6$, with $\zeta_6$ (a point of degree $6$, which is either) a zero of $x^6 + x + 1$ or a zero of $x^6 + x^3 + 1$ in $\Fbar_2$.  
\end{proof}

\noindent
The previously described reasoning leads to an algorithm for computing the set $\Hyp_5(\F_2)$, that is practically the same as the one for computing $\Hyp_4(\F_2)$ from \cite{xarles}. 

\begin{center}
\textbf{Algorithm 1. Determine $\Hyp_5(\F_2)$.}  
\end{center}

\begin{itemize}
    \item[Step 0] From the previous theorem, we get $\mathbf{list\_of\_qs}$, the list of all possible representatives for a polynomial $q(x)$. 
    
    \item[Step 1] For each $q(x)$ in $\mathbf{list\_of\_qs}$ compute the stabilizer $\mathrm{Stab}(q(x)) \subseteq \PGL_2(\F_2)$ of $q(x)$ under the action defined by $\psi_{6}(\bigl(\begin{smallmatrix}
a & b\\ 
c & d
\end{smallmatrix}\bigr))(q(x)) = (cx + d)^6q(\frac{ax + b}{cx + d})$ for $\bigl(\begin{smallmatrix}
a & b\\ 
c & d
\end{smallmatrix}\bigr) \in \PGL_2(\F_2)$. 
    
    \item[Step 2] For fixed $q(x)$ in $\mathbf{list\_of\_qs}$, check whether a polynomial $p(x)\in \F_2[x]$, which satisfies $\allowbreak 11 \leq \max\{2 \deg(q(x)), \deg(p(x))\} \leq 12\allowbreak$, is such that $C: y^2 + q(x)y = p(x)$ is a (nonsingular) curve; collect all such $p(x)$'s in the list $\mathbf{q\_list\_of\_ps}$ of potential $p(x)$'s for $q(x)$. The smoothness condition can be checked using Lemma \ref{lem:xarles_lem3}, saying that $C$ is a (nonsingular) curve of genus $5$ if and only if $\gcd(q(x), p'(x)^2 + q'(x)^2p(x)) = 1$ and either $\deg(q(x)) = 6$ or $a^2_{11} \neq a_{12}b_{5}^2$, where $p(x) = \sum^{12}_{i=0} a_ix^i$ and $q(x) = \sum^{6}_{i=0}b_ix^i.$

    \item[Step 3] Fix $q(x)$ in $\mathbf{list\_of\_qs}$ and consider $\mathbf{q\_list\_of\_ps}$, its associated list of potential $p(x)$'s. For curves $C_1: y^2 + q(x)y = p_1(x)$ and $C_2: y^2 + q(x)y = p_2(x)$, we write $p_1(x) \sim p_2(x)$ if they are isomorphic over $\F_2$. Refine $\mathbf{q\_list\_of\_ps}$ by taking only the representatives $p(x)$ for this relation $\sim$. With the same argument as in Lemma \ref{lem:xarles_lem2}, we find that the relation $\sim$ is defined as: $p_1(x) \sim p_2(x)$ if and only if $(cx + d)^{12}p_2(\frac{ax + b}{cx + d}) = p_1(x) + r(x)^2 + r(x)q(x)$ for some $\bigl(\begin{smallmatrix}
a & b\\ 
c & d
\end{smallmatrix}\bigr) \in \mathrm{Stab}(q(x))$ and some $r(x)\in \F_2[x]$ of degree $\deg(q(x))\leq 6$. 
\end{itemize}

In such a manner, using the mathematical software \textsc{SageMath}, we computed the list of all non-isomorphic hyperelliptic curves of genus $5$ defined over $\F_2$. There are in total $1070$ such curves, i.e., $|\Hyp_5(\F_2)| = 1070$, and we confirmed that $$|\mathcal{H}_5(\F_2)| = \sum_{C \in \Hyp_5(\F_2)}\frac{1}{|\Aut_{\F_2}(C)|} =  512 = 2^{2\cdot5 - 1}.$$ For curves in $\Hyp_5(\F_2)$, we computed their numbers of points over finite fields $\F_{2^N}$, for $N \in \{1, 2, 3, 4, 5\}$, and then we found their Newton polygons. In particular, all the Newton polygons but one occur for elements of this class of genus $5$ (nonsingular) curves over $\F_2$. We mention explicitly some of the results in Section \ref{sec:results}.

\section{Trigonal curves}
Let $C$ be a (projective) plane curve. We say that a singularity of $C$ is of \textbf{delta invariant} $1$ if it is either a node (an ordinary double point), where a curve is locally of the form $xy = 0$, or an ordinary cusp, so that the curve is locally $y^2 = x^3$. 
\\
\\
A standard computation using the Riemann-Roch theorem and the genus-degree formula give us a well-known fact, which we use to compute $\Trig_5(\F_2)$:
\begin{thm}
A curve $C$ of genus $5$ is trigonal if and only if it can be represented as a plane quintic with precisely one singularity of delta invariant $1$.
\end{thm}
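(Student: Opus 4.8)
The plan is to prove the two implications separately; the genus-degree formula together with a single Riemann-Roch computation does almost all of the bookkeeping, and the only genuine content is controlling the morphism attached to a residual linear system. Throughout I read ``trigonal'' in the sense of the trichotomy fixed in the introduction, i.e.\ \emph{non-hyperelliptic} and carrying a $g^1_3$; this is essential, as explained below. I would begin with the easier implication, that a plane quintic with one singularity of delta invariant $1$ is trigonal. Let $Q\subset\P^2$ be such a quintic with singular point $P$, and let $C\to Q$ be its normalization. Since both a node and an ordinary cusp drop the genus by exactly one, the genus-degree formula gives $p_a(Q)=\binom{5-1}{2}=6$ and hence $g(C)=6-1=5$. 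Projecting from $P$ then does the job: a general line through $P$ meets $Q$ in $5$ points counted with multiplicity, of which $2$ are absorbed at the double point $P$, leaving a residual $3$. Pulling the pencil of lines through $P$ back to $C$ produces a base-point-free linear system of degree $3$ and dimension $1$, i.e.\ a $g^1_3$, so $C$ is trigonal.

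For the converse, suppose $C$ has genus $5$ and is trigonal, and let $|D|$ be the complete linear system of its $g^1_3$, so $\deg D=3$; Clifford's theorem (applied to the special divisor $D$) forces $\ell(D)=2$. Writing $K$ for a canonical divisor ($\deg K=8$, $\ell(K)=5$), Riemann-Roch gives $\ell(D)-\ell(K-D)=\deg D+1-g=-1$, whence $\ell(K-D)=3$ while $\deg(K-D)=5$. Thus $|K-D|$ is a $g^2_5$, and I would let it define a morphism $\varphi\colon C\to\P^2$. Granting that $\varphi$ is base-point-free (see the next paragraph), its image $Q$ is a \emph{nondegenerate} plane curve, so $\deg Q\geq 2$; since $\deg(K-D)=5$ is prime and equals $(\deg\varphi)\cdot(\deg Q)$, the only possibility is $\deg\varphi=1$ and $\deg Q=5$, i.e.\ $\varphi$ is birational onto a plane quintic $Q$. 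Its arithmetic genus is $\binom{4}{2}=6$ while its geometric genus equals $g(C)=5$, so $\sum_{x\in Q}\delta_x=p_a(Q)-p_g(Q)=1$. As the node and the ordinary cusp are the only singularities with $\delta=1$, the curve $Q$ has precisely one singularity and it is of delta invariant $1$, as required.

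The main obstacle is therefore the single claim that $|K-D|$ is base-point-free; everything else is routine, and birationality is then automatic by the primality of $5$. I would establish base-point-freeness by a Riemann-Roch estimate: if $x$ were a base point, then $\ell(K-D-x)=3$ with $\deg(K-D-x)=4$, and applying Riemann-Roch to $D+x$ yields $\ell(D+x)=3$, i.e.\ a $g^2_4$ on $C$. But a $g^2_4$ cannot be birational onto its image (a plane quartic has arithmetic genus $3<5$), so it would be composed with a degree-$2$ map onto a conic, forcing $C$ to be hyperelliptic. This is exactly where non-hyperellipticity is used: for a hyperelliptic curve the analogous residual system does acquire a base point and collapses to the $g^2_4$ that traces a conic twice, so no plane quintic arises, whereas for a genuinely trigonal $C$ no $g^2_4$ exists and $|K-D|$ is base-point-free. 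Handling this dichotomy carefully is the crux of the argument.
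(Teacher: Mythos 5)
Your proof is correct and takes precisely the route the paper intends: the paper offers no written proof, asserting only that the statement follows from ``a standard computation using the Riemann-Roch theorem and the genus-degree formula,'' and your argument --- projection from the $\delta = 1$ double point in one direction, and the residual system $|K-D|$ with the primality-of-$5$ degree count in the other --- is exactly that standard computation, with the key base-point-freeness issue correctly identified and resolved. The one step you leave implicit is routine: the $g^2_4$ arising in your base-point argument is itself base-point-free (removing a base point would yield a $g^2_3$, which Clifford's theorem forbids on a curve of genus $5$), so the degree count $4 = \deg\varphi \cdot \deg(\mathrm{image})$ you rely on is legitimate.
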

 
\noindent
 Any isomorphism of such curves $C_1$ and $C_2$ extends to an automorphism of $\P^2$. 
For a matrix $M = \begin{pmatrix}
m_{11} & m_{12} & m_{13}\\ 
m_{21} & m_{22} & m_{23}\\
m_{31} & m_{32} & m_{33}
\end{pmatrix}$ in $\PGL_3(\F_2)$ and $q(X, Y, Z)$ a homogeneous polynomial in $\F_2[X, Y, Z]$, the formula  
\begin{equation}
\label{eq:acttrig}
M.q(X, Y, Z) = q(m_{11}X + m_{12}Y + m_{13}Z, m_{21}X + m_{22}Y + m_{23}Z, m_{31}X + m_{32}Y + m_{33}Z)
\end{equation}
defines an action of $\PGL_3(\F_2)$ on the set of all homogeneous polynomials in $\F_2[X, Y, Z]$. Alternatively, we can define the mentioned action $M.q(X, Y, Z)$ as $q(M.(X, Y, Z))$ with $$M.(X, Y, Z) = M\cdot(X, Y, Z)^t.$$ 

Therefore, in order to determine the list of all trigonal curves of genus $5$ defined \allowbreak{over $\F_2$,} \allowbreak it is sufficient to find $\PGL_3(\F_2)$-representatives among all the quintic homogeneous polynomials in $X, Y, Z$ that define projective plane curves with delta invariant 1. 
 \\
 
 \noindent  To compute all trigonal curves of genus $5$ over $\F_2$, we have implemented the following algorithm in \textsc{SageMath}.
\\
\\

\allowbreak
\begin{center}
\textbf{Algorithm 2. Determine $\Trig_5(\F_2)$.}  
\end{center}
\begin{itemize}
    \item[Step 1] Make a list of all the monomials in $X, Y, Z$ of degree $5$ and fix the order of these, e.g. the lexicographic order \small{$X^5 > X^4Y > X^4Z >$ $X^3Y^2 > X^3YZ > X^3Z^2 > X^2Y^3 > X^2Y^2Z > X^2YZ^2 > X^2Z^3 > XY^4 > XY^3Z > XY^2Z^2 >$  $XYZ^3 > XZ^4 > Y^5 > Y^4Z > Y^3Z^2 > Y^2Z^3 > YZ^4 > Z^5$}. \normalsize Since there are $21$ monomials, we can represent all homogeneous polynomials of degree $5$ using coordinates of $\mathbb{P}^{20}({\F_2})$. Call the list of $21$-tuples $\mathbf{quintics}$. \\
    (\small{\textit{For example, $X^5 + YZ^4 \longleftrightarrow (1:0:0:0:0:0:0:0:0:0:0:0:0:0:0:0:0:0:0:1:0)$ under this correspondence}.})\normalsize
    \item[Step 2] From the starting list $\mathbf{quintics}$ obtain the new list $\mathbf{quintics\_repr}$ consisting only of representatives under the action of $\PGL_3(\F_2)$ on $X, Y, Z$. 
    \item[Step 3] Deduce whether a plane quintic corresponding to an element of $\mathbf{quintics\_repr}$ has exactly one singularity of order $2$ to reduce the previous list, and get the list $\mathbf{good\_quintics}$. 
    \item[Step 4] For all quintics with exactly one singularity $P$ of order $2$, represented by elements of  $\mathbf{good\_quintics}$, find a $\PGL_3(\F_2)$-isomorphic quintic with a singularity at $(0:0:1)$, such that its tangent space at $(0:0:1)$ is either $xy$ or $x^2 + xy + y^2$  (nodal case), or $y^2$ (potentially cuspidal case). The resulting list is $\mathbf{good\_quintics\_001}$.
    
    \item[Step 5] For all potentially cuspidal quintics decide whether there is a $\PGL_3(\F_2)$-isomorphic quintic with lowest terms $y^2 + x^3$, since only they are cuspidal with delta invariant $1$. Collect all such quintics, as well as the nodal quintics from $\mathbf{good\_quintics\_001}$ into the resulting list $\mathbf{trigonal\_curves}$.

\end{itemize}
\allowbreak
We found $2854$ trigonal curves in total that are not isomorphic (via $\PGL_3(\F_2)$-transformation), i.e. $|\Trig_5(\F_2)|=2854$, and we computed their automorphism groups over $\F_2$. In particular, we have obtained that $$|\mathcal{T}_5(\F_2)|:= \sum_{C \in \Trig_5(\F_2)}\frac{1}{|\mathrm{Aut}(C)|},$$ the number of (non-isomorphic) smooth trigonal curves of genus $5$ defined over the finite field with two elements weighted by the size of their automorphism group, i.e. the stack count for trigonal curves of genus 5 over $\F_2$, precisely equals $$|\mathcal{T}_5(\F_2)| = 2817 = 2^{11} + 2^{10} - 2^8 + 1.$$ This matches Wennink's results from \cite{wennink}, where he, using a partial sieve method for plane curves, computed these weighted numbers for any finite field with $q$ elements $\F_q$, and obtained $|\mathcal{T}_5(\F_q)| = q^{11} + q^{10} - q^8 + 1$. 

\newpage

\section{Complete intersections of three quadrics in $\P^4$}
The remaining curves of genus $5$ over $\F_2$ are the ones whose canonical embedding in $\mathbb{P}^4$ is a complete intersection of three quadric hypersurfaces. In other words, these curves $C$ are of the form $$C = Z(q_{P})\cap Z(q_{Q}) \cap Z(q_{R}) = Z(q_P, q_Q, q_R)  ,$$ where $q_P, q_Q, q_R \in \F_2[X, Y, Z, T, U]$ are homogeneous geometrically irreducible of degree $2$ with no non-trivial $\F_2$-linear combination among them. 

Denote the set of all the non-isomorphic representatives over $\F_2$ of these curves by $\ComInt_5(\F_2)$. These non-isomorphic curve classes, weighted by reciprocal of the size of their automorphism groups (over $\F_2$), which we denote $\mathcal{U}_5(\F_2)$, are the $\F_2$-points of the non-hyperelliptic non-trigonal locus $\mathcal{U}_5$ inside $\mathcal{M}_5$. 

The idea behind computing curves $C$ in $\ComInt_5(\F_2)$ is  as follows. 
First, we give a set $\Sigma$ of triples $(q_P, q_Q, q_R)$ of homogeneous quadratic polynomials in $\F_2[X, Y, Z, T, U]$ so that for any $C$ in $\ComInt_5(\F_2)$, there is an element of $\Sigma$ defining a curve isomorphic to $C$. Then, we filter the set $\Sigma$ to get the set $\ComInt_5(\F_2)$ so that no two elements of it define curves isomorphic over $\F_2$. Recall that the curves with canonical embedding into $\P^4$ are isomorphic over $\F_2$ if and only if their canonical models in $\P^4$ are isomorphic via some projective automorphism $M \in \PGL_5(\F_2)$.
\\
%to consider all possible triples $(q_P, q_Q, q_R)$ of quadratic homogeneous polynomials, to check whether they satisfy certain conditions and then to filter the list so that we remove the isomorphic copies of them from that list. To decide whether such a triple $(q_P, q_Q, q_R)$ defines $C$ as above, in practice, we should check whether the ideal $I = \<q_P, q_Q, q_R\>$ is an ideal generated by exactly three elements of $I$, and whether $C$ is smooth.  

\noindent
 The group $\PGL_5(\F_2)$ acts on the subset of homogeneous quadratic polynomials in the polynomial ring $\F_2[X, Y, Z, T, U]$ by acting on variables $X, Y, Z, T, U$ via
\begin{equation}
M.(X, Y, Z, T, U) = M\cdot(X, Y, Z, T, U)^t
\label{eq:PGL_action_on_quadrics}    
\end{equation}
\noindent
for $M \in \PGL_5(\F_2)$. This induces the right action on homogeneous quadratic polynomials.

For practical work in the program \textsc{SageMath}, we represent the quadrics 
\begin{align*}
q_P = p_0 X^2 &+ p_1 XY + p_2 XZ + p_3 XT + p_4 XU + p_5Y^2 + p_6YZ + p_7YT + \\
&+ p_8YU +  p_9Z^2 + p_{10}ZT + p_{11}ZU + p_{12}T^2 + p_{13}TU + p_{14}U^2
\end{align*}
\noindent
using the $15$-tuples $P \in (\F_2)^{15} - \{0\}$ of the coefficients in $q_P$, $$P = (p_0, p_1, p_2, p_3 , p_4, p_5, p_6, p_7, p_8, p_9, p_{10}, p_{11}, p_{12}, p_{13}, p_{14}).$$ 

As usual, with $\Stab(q)$ and $\Orbit(q)$ we denote the stabilizer and the orbit of a homogeneous quadratic polynomial $q$ with respect to the action \eqref{eq:PGL_action_on_quadrics}. Using the correspondence above, for a $15$-tuple $P \in (\F_2)^{15} - \{0\}$ we set $$\Stab(P):= \Stab(q_P) \quad \text{and} \quad \Orbit(P):=\Orbit(q_P).$$ These notations follow by inducing the action of $\PGL_5(\F_2)$ on $(\F_2)^{15}-\{0\}$, where we define $M.P = Q$, for $M \in \PGL_5(\F_2)$ and $P, Q \in (\F_2)^{15}-\{0\}$ if it holds that $M.q_P = q_Q$. 
\\

\noindent First, we determine the set $\Sigma$.
\\

\noindent
\begin{center}
\textbf{Algorithm 3. Determine $\Sigma$.}  
\end{center}
\textit{\underline{Description.} Form a set $\Sigma$ of triples of quadrics $(q_P, q_Q, q_R)$ such that for any non-hyperelliptic non-trigonal curve $C$ of genus five over $\F_2$, there is a triple $(q'_P, q'_Q, q'_R)$ in $\Sigma$ such that $C$ is isomorphic to $Z(q'_P, q'_Q, q'_R)$ over $\F_2$.}

\begin{itemize}
    \item[Step 1]  Among all (nonzero) $15$-tuples representing the quadratic polynomials, find the representatives for the $\PGL_5(\F_2)$-action irreducible over $\Fbar_2$. There are seven representatives for the $\PGL_5(\F_2)$-action. Two of them are reducible over $\F_2$, while one of them is irreducible over $\F_2$, but not over $\F_4$. The remaining four are irreducible over $\Fbar_2$. 
    
    \textit{Output: $\mathbf{list\_of\_Ps}$; call its elements $P_1, P_2, P_3$, and $P_4$.} 
    
    (\textit{Using matrices from $\PGL_5(\F_2)$, we can always find an isomorphism of curves, $\allowbreak C = Z(q_1, q_2, q_3)\allowbreak \cong C'= Z(q_P, q_Q, q_R) \allowbreak$ over $\F_2$ with $q_P$ in $\mathbf{list\_of\_Ps}$.}) 
    
    \item[Step 2] For $P = P_4$, let $\textbf{P\_potential\_QRs}$ be the list consisting of all elements contained in the union $\Orbit(P_1)\cup \Orbit(P_2)\cup \Orbit(P_3)\cup \Orbit(P_4)$. For $P = P_3$, let $\textbf{P\_potential\_QRs}$ be $\Orbit(P_1)\cup \Orbit(P_2)\cup \Orbit(P_3)$. For $P = P_2$, let $\textbf{P\_potential\_QRs}$ be $\Orbit(P_1)\cup \Orbit(P_2)$. Lastly, let $\textbf{P\_potential\_QRs}$ be $\Orbit(P_1)$ for $P = P_1$.
    
    Fix $P$ in $\textbf{list\_of\_Ps}$, and find the representatives for the second quadric. Among the elements of $\mathbf{P\_potential\_QRs}$, find the representatives for the $\Stab(P)$-action. We do that by taking an element $Q$ from the list $\mathbf{P\_potential\_QRs}$ of $15$-tuples, putting it in $\textbf{P\_list\_of\_Qs}$, and removing from the $\mathbf{P\_potential\_QRs}$ all the $\Stab(P)$-conjugates of $q_Q$ and $(q_Q + q_P)$. 
    
    \textit{Output: $\textbf{P\_list\_of\_Qs}$ for $P = P_1, P_2, P_3, P_4$.}
    
    (\textit{As above, using matrices from $\Stab(q_P)$, we can find an isomorphism over $\F_2$ $$C = Z(q_1, q_2, q_3) \cong C' = Z(q_P, q_Q, q_R)$$ with $P$ in $\mathbf{list\_of\_Ps}$ and $Q$ in $\mathbf{P\_list\_of\_Qs}$.})    
    
    \item[Step 3] Find the representatives for the third quadric. Fix $P$ in $\mathbf{list\_of\_Ps}$ and fix $Q$ in $\mathbf{P\_list\_of\_Qs}$. Using the same reasoning as in Step 2, start from the whole list $\mathbf{P\_potential\_QRs}$ (depending only on $P$), take an element $R$ of it and move it to the list $\mathbf{PQ\_list\_of\_Rs\_apriori}$ and erase all the $(\Stab(P)\cap \Stab(Q))$-conjugates of $R, P+R, Q + R, P + Q + R$ from $\mathbf{P\_potential\_QRs}$. Repeat this until $\mathbf{P\_potential\_QRs}$ is empty. 
    
    For fixed $P$ and $Q$ as above and $R$ in $\mathbf{PQ\_list\_of\_Rs\_apriori}$, check whether all the elements from $(\F_2\cdot P + \F_2\cdot Q + \F_2\cdot R)-\{0\}$ are in $\mathbf{P\_potential\_QRs}$ (for such $P$), and whether the triple $(q_P, q_Q, q_R)$ defines a non-singular curve over $\Fbar_2$. 
\end{itemize}
\noindent
For each $P$ in $\mathbf{list\_of\_Ps}$, we ended up with a list of triples $(P, Q, R)$ defining non-singular curves. Call them $\mathbf{P\_list}$, for $P = P_1, P_2,  P_3, P_4$. The union of all these lists $\mathbf{P\_list}$ is the desired set $\Sigma$.

\begin{thm}
For any non-hyperelliptic non-trigonal curve $C$ of genus five over $\F_2$, there is an element $P$ in $\mathbf{list\_of\_Ps}$ and a triple $(P, Q, R)$ in $\mathbf{P\_list}$ so that $C \cong Z(q_P, q_Q, q_R)$ over $\F_2$.
\end{thm}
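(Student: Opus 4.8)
The plan is to read the statement as a correctness proof for Algorithm 3: given an arbitrary non-hyperelliptic non-trigonal genus-five curve $C$ over $\F_2$, I want to trace it through the three normalization steps and land on a triple $(P,Q,R)\in\mathbf{P\_list}$. I start from the standard fact that the canonical model of such a $C$ is a complete intersection $Z(W)$, where $W\subseteq\F_2[X,Y,Z,T,U]_2$ is a three-dimensional space of quadrics (its \emph{net}), and that $C\cong C'$ over $\F_2$ precisely when their nets are carried into one another by some $M\in\PGL_5(\F_2)$. The whole argument is then a descent through a chain of stabilizers: first kill the ambiguity in the first quadric using all of $\PGL_5(\F_2)$, then the ambiguity in the second using $\Stab(q_P)$, then the third using $\Stab(q_P)\cap\Stab(q_Q)$, exactly mirroring Steps 1--3.

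The crux, and the one genuinely geometric input, is the following lemma, which I would isolate and prove first: every nonzero $q\in W$ defines a geometrically irreducible quadric. The point is that $C$ is non-degenerate in $\P^4$ (a canonically embedded curve spans its ambient space), while $C=Z(W)\subseteq Z(q)$ for every $q\in W$. If some $q$ were geometrically reducible, then $Z(q)_{\Fbar_2}$ would be a union of two (possibly equal, possibly Frobenius-conjugate) hyperplanes, and irreducibility of $C_{\Fbar_2}$ would force $C_{\Fbar_2}$ into one of them, contradicting non-degeneracy. This is what justifies the design of Steps 2--3, where $\mathbf{P\_potential\_QRs}$ is assembled only from the orbits $\Orbit(P_1),\dots,\Orbit(P_4)$ of the four geometrically irreducible types: for a genuine canonical net, \emph{all seven} of its nonzero members are of one of these four types, so nothing is lost by ignoring the three reducible orbits.

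With the lemma in hand, the descent runs as follows. Among the four irreducible types $P_1,\dots,P_4$, let $P=P_j$ be the largest-indexed type occurring among the members of $W$; choosing a quadric of that type and applying the corresponding $M\in\PGL_5(\F_2)$ from Step 1, I may assume $q_P\in W$, and since $\PGL_5(\F_2)$ preserves orbit types, every member of $M\cdot W$ still has type $\le P_j$, hence lies in $\mathbf{P\_potential\_QRs}$. Next I pick any $q'\in W\setminus\langle q_P\rangle$; the pencil $\langle q_P,q'\rangle$ has the two non-$q_P$ members $q'$ and $q'+q_P$, which is exactly the identification built into Step 2 (conjugates of both $q_Q$ and $q_Q+q_P$ are discarded), so a suitable $N\in\Stab(q_P)$ takes $q'$ to the representative $q_Q\in\mathbf{P\_list\_of\_Qs}$ while keeping $q_P\in N\cdot W$. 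Finally, completing to $W=\langle q_P,q_Q,q_R\rangle$, the four members of $W$ outside $\langle q_P,q_Q\rangle$ are $q_R,\,q_R+q_P,\,q_R+q_Q,\,q_R+q_P+q_Q$, precisely the four classes erased together in Step 3, so an element of $\Stab(q_P)\cap\Stab(q_Q)$ normalizes the third quadric to a representative $R$.

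It then remains to observe that the two acceptance checks of Step 3 are automatic for our $C$: the net $\F_2 P+\F_2 Q+\F_2 R$ is $W$ itself, whose nonzero members all lie in $\mathbf{P\_potential\_QRs}$ by the lemma and the maximality of $j$, and $Z(q_P,q_Q,q_R)\cong C$ is nonsingular by hypothesis. Hence $(P,Q,R)\in\mathbf{P\_list}$ with $C\cong Z(q_P,q_Q,q_R)$, as claimed. I expect the main obstacle to be not the bookkeeping of the stabilizer descent but the clean justification of the irreducibility lemma, together with the verification that the cumulative definition of $\mathbf{P\_potential\_QRs}$ (types $\le P_j$) is consistent with choosing $P_j$ maximal, i.e.\ that this ordering genuinely prevents a net from being missed.
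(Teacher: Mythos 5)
Your proposal is correct and follows essentially the same route as the paper's proof: descend on the maximal orbit type $P_j$ occurring in the net, normalize the first quadric by $\PGL_5(\F_2)$, the second by $\Stab(q_P)$ (up to adding $q_P$), and the third by $\Stab(q_P)\cap\Stab(q_Q)$ (up to adding elements of $\langle q_P, q_Q\rangle_{\F_2}$), which is exactly the paper's case analysis from $P_4$ down to $P_1$. The only substantive addition is your irreducibility lemma (every nonzero member of the canonical net is geometrically irreducible, proved via non-degeneracy of the canonical embedding), a fact the paper assumes implicitly when it asserts that all elements of the net lie in $\Orbit(P_1)\cup\Orbit(P_2)\cup\Orbit(P_3)\cup\Orbit(P_4)$; making it explicit tightens the argument but does not change the approach.
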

 
\begin{proof}
Take $C = Z(q_1, q_2, q_3)$. If in $(\F_2\cdot q_1 + \F_2\cdot q_2 + \F_2\cdot q_3)-\{0\}$ there is an element $q$ belonging to the $\Orbit(q_{P_4})$, we may without of loss of generality assume that $q_1 = q$ and set $q_2 = q_2', q_3 = q_3'$ so that $\<q_1, q_2, q_3\> = \<q, q_2', q_3'\>$. Write $P = P_4$. Thus, we can isomorphically map $C$ to $C' = Z(q_P, q_R', q_Q')$ using a matrix $M \in \PGL_5(\F_2)$, such that $M.q_1 = q_P$, $M.q_2 = q_Q'$ and $M.q_3 = q_R'$. Then, using the  $\Stab(P)$-action we can isomorphically map $C'$ to a curve $C'' = Z(q_P, q_Q, q_R'') = Z(q_P, q_Q + q_P, q_R'')$ so that $q_Q'$ goes to either $q_Q$ or $q_Q + q_P$ for some $Q$ in $\mathbf{P\_list\_of\_Qs}$. Lastly, using the $\Stab(P)\cap \Stab(Q)$-action, we can isomorphically map $C''$ to $C''' = Z(q_P, q_Q, q_R)$ so that $q_R'''$ goes to an element from $\{q_R, q_R + q_P, q_R + q_Q, q_R + q_P + q_Q\}$ for $R$ in $\mathbf{PQ\_list\_of\_Rs}$. In particular, $$C \cong Z(q_P, q_Q, q_R),$$ for $(P, Q, R)$ in $\mathbf{P4\_list}$. 

Otherwise, all elements from $S = (\F_2\cdot q_1 + \F_2\cdot q_2 + \F_2\cdot q_3)-\{0\}$ are of the form $q_P$ for some $P$ in $\Orbit(P_1)\cup \Orbit(P_2)\cup \Orbit(P_3)$. If there is $q = q_P \in S$ so that $P$ belongs to $\Orbit(P_3)$, set $q_1 = q$, and proceed as above. If there is no such $q$, but there is $q = q_P$, with $P$ in $\Orbit(P_2)$, set $q_1 = q$ and repeat the argument. The remaining case is that all the elements from $(\F_2\cdot q_1 + \F_2\cdot q_2 + \F_2\cdot q_3)-\{0\}$ are of the form $q_P$ for some $P$ in $\Orbit(P_1)$; the argument goes analogously.

Therefore, our construction of lists $\mathbf{P\_list}$, for $P = P_1, P_2, P_3, P_4$ is satisfactory. Namely, by the above, any curve $C$ will have a(t least one) representative occurring in the union of lists $\mathbf{P\_list}$, for $P = P_1, P_2, P_3, P_4$.

Checking the condition of whether the curves are smooth over a field $\Fbar_2$ was done by using the function $\mathrm{is\_smooth()}$,  Furthermore, checking whether the considered varieties are curves indeed, i.e. one-dimensional, was done using the function $\mathrm{dimension()}$.
Both mentioned functions were already implemented in \textsc{SageMath}.
\end{proof}

\noindent
The second part of computing the isomorphism classes of non-hyperelliptic non-trigonal curves of genus five over the field with two elements consists of reducing the obtained lists $\mathbf{P\_list}$ for $P = P_1, P_2, P_3, P_4$, and obtaining the final list $\mathbf{Com\_Int\_5\_F2}$. For each curve $C \in \ComInt(\F_2)$, we want to have precisely one element of $\mathbf{Com\_Int\_5\_F2}$ representing the $\F_2$-isomorphism class of $C$.

Firstly, note that by our construction, for $i, j \in \{1, 2, 3, 4\}$ and $i\neq j$, no two triples $$(P, Q, R) \in \mathbf{Pi\_list}, \quad \text{and } (P', Q', R') \in \mathbf{Pj\_list}$$ can define isomorphic curves. Indeed, for any element $(P, Q, R) \in \mathbf{P4\_list}$ in the set $(\F_2\cdot q_P + \F_2\cdot q_Q + \F_2\cdot q_R)-\{0\}$ there is at least one element $q$ which is in $\Orbit(q_{P_4})$, which is not true for the elements in $\mathbf{Pi\_list}$ for $i = 1, 2, 3$. Also, for any element $(P, Q, R) \in \mathbf{P3\_list}$ in $(\F_2\cdot q_P + \F_2\cdot q_Q + \F_2\cdot q_R)-\{0\}$ there is at least one element $q$ which is in $\Orbit(q_{P_3})$, and there is none in $\Orbit(q_{P_4})$, and similarly for the remaining ones. 

Then, for a fixed list $\mathbf{Pi\_list}$, with $P_i$ one of $P_1, P_2, P_3$, or $P_4$, we take an element $(P, Q, R)$ of it, consider the vector space $(\F_2\cdot P + \F_2\cdot Q + \F_2\cdot R)-\{0\}$ and in it, we find all the elements $S$ in $\Orbit(P_i)$. The only possibility of these triples in $\mathbf{Pi\_list}$ to define the same curve is that some of these elements $S$ go to the $P_i$. Therefore, the idea is to map $S$ to $P_i$ using all possible $\PGL_5(\F_2)$-transformations to check whether the isomorphism is established. In this way, we will finally determine the lists of the desired representatives of the isomorphism classes. Note that two triples $(P, Q, R)$ and $(P', Q', R')$ in $\Sigma$ define the same curve if and only if $\<P, Q, R\>_{\F_2} = \<P', Q', R'\>_{\F_2}$.  

\begin{center}
\textbf{Algorithm 4. Determine $\ComInt_5(\F_2)$.}  
\end{center}
\textit{\underline{Description.} Consider the triples in $\Sigma$ one after the other. For a chosen triple $(P, Q, R)$, which defines a curve $C$, find all the triples in $\Sigma - \{(P, Q, R)\}$ defining a curve that is isomorphic over $\F_2$ to $C$ and remove them from $\Sigma$.}

\begin{itemize}
    \item[Step 0] Let $\mathbf{Com\_Int\_5\_F2}$ be an empty list at the beginning.
    
    \item[Step 1] For fixed $P = P_i$ in $\{P_1, P_2, P_3, P_4\}$ take a triple $(P, Q, R)$ in $\mathbf{P\_list}$. Add it to the final list $\mathbf{Com\_Int\_5\_F2}$. Let $C = Z(q_P, q_Q, q_R)$.
    \item[Step 2] For a triple $(P, Q, R)$ from Step 1, consider the vector space $W = \F_2\cdot P + \F_2\cdot Q + \F_2\cdot R$ and let $D = W\cap \Orbit(P_i)$. For each element in $D$, find all matrices $M \in \PGL_5(\F_2)$ mapping it to $P = P_i$. 
    \item[Step 3] Use the matrices $M$, obtained in Step 2, to act on $(P, Q, R)$. If $(M.P, M.Q, M.R)$ defines the same curve as some triple from $\mathbf{P\_list}$, remove such a triple from $\mathbf{P\_list}$.
\end{itemize}

\noindent Let us make a few remarks about the practical background of the implementation of \textit{Algorithm 4}.

To find all matrices $M \in \PGL_5(\F_2)$ mapping an element $S \in \Orbit(P_i)$ to $P_i$ it is enough to have only one such matrix $M_0$ and to know all matrices in $\Stab(P_i)$. Namely, it is not hard to see that for any such $S$ and $M$, there is a matrix $N \in \Stab(P_i)$ so that $M.S = N.(M_0.S)$.  

Note that in Step 3, we will always remove at least one element from $\mathbf{P\_list}$, namely $(P, Q, R)$, so that the process terminates. As discussed in the last paragraph before the presentation of \textit{Algorithm 4}, in this way, we will indeed remove from $\mathbf{P\_list}$ all the triples $(P', Q', R')$ defining a curve isomorphic over $\F_2$ to $C = Z(q_P, q_Q, q_R)$. Lastly, in Step 3, we check whether $(M.P, M.Q, M.R)$ defines the same curve as some triple from $\mathbf{P\_list}$ using the criterion occurring in the paragraph just mentioned, by checking the equality of the corresponding vector spaces. 
\\

\noindent
The obtained set $\ComInt_5(\F_2)$ consists of $3905$ elements. %Shortly, we will discuss some of their properties.

\subsection{Computing the automorphisms over $\F_2$}

An automorphism $C$ over $\F_2$, for $C \in \ComInt_5(\F_2)$ is induced by a matrix in $\PGL_5(\F_2)$. If $M \in \PGL_5(\F_2)$ is such that $M: C \overset{\cong}{\to} C$ and $C = Z(q_P, q_Q, q_R)$, then $$\<Mq_P, Mq_Q, Mq_R\>_{\F_2} = \<q_P, q_Q, q_R\>_{\F_2}.$$ Therefore, for triples $(P, Q, R)$ in $\mathbf{Com\_Int\_5\_F2}$ with $P = P_i \in \{P_1, P_2, P_3, P_4\}$, either $M \in \Stab(P)$ or $M$ maps an element of $V = (\F_2P + \F_2Q + \F_2R) - \{0, P\}$ to $P$. In the latter case, we see that $M$ needs to belong to the set of matrices mapping elements from $V \cap \Orbit(P)$ to $P$; call that set $D = D_{(P, Q, R)}$. Therefore, for fixed $(P, Q, R)$ as above, we can only check whether $$\F_2\cdot M.P + \F_2\cdot M.Q + \F_2\cdot M.R = \F_2P + \F_2Q + \F_2R$$ for $M$ in $\Stab(P)\cup D_{(P, Q, R)}$. 

From the previous discussion, we can easily get the precise steps of an algorithm for computing $\mathrm{Aut}_{\F_2}(C)$ for each $C \in \ComInt_5(\F_2)$. 
\\

\noindent  We implemented the algorithms from this section in \textsc{SageMath} and computed the sets $\Sigma$ and $\ComInt_5(\F_2)$, as well as $\mathrm{Aut}_{\F_2}(C)$, the automorphism groups over $\F_2$, for each curve $\allowbreak C \in \ComInt_5(\F_2). \allowbreak$ Moreover, for each $C \in \ComInt_5(\F_2)$, we computed the number of $\F_{2^{N}}$-points of $C$ for $N = 1, 2, 3, 4, 5$ to get the isogeny classes over $\F_2$, as well as to compute Newton polygons of the obtained curves.

\section{Obtained results}
\label{sec:results}
As we already indicated, using the algorithms from Section 3 for hyperelliptic, Section 4 for trigonal, and Section 5 for non-hyperelliptic non-trigonal curves of genus $5$ over $\F_2$, we computed $\Hyp_5(\F_2), \Trig_5(\F_2)$, and $\ComInt_5(\F_2)$, the sets of all the isomorphism representatives. For all the obtained curves, we computed the number of points over $\F_{2^N}$ for $N \in \{1, 2, 3, 4, 5\}$ and determined the elements of their automorphism groups. 

There are $1070$ hyperelliptic, $2854$ trigonal, and $3905$ non-hyperelliptic non-trigonal curves of genus 5 over $\F_2$, so in total, there are $7829$ pairwise non-isomorphic curves of genus 5 over $\F_2$.

We already mentioned the stack counts $$|\mathcal{H}_5(\F_2)| = 512 \quad \text{and} \quad |\mathcal{T}_5(\F_2)| = 2817,$$
and we got the stack count $$|\mathcal{U}_5(\F_2)| = \sum_{C \in \ComInt_5(\F_2)}\frac{1}{| \mathrm{Aut}_{\F_2}(C)|} = 3584 = 2^{12} - 2^9.$$ Therefore, we have $$|\mathcal{M}_5(\F_2)| = 6913 = 2^{12} + 2^{11} + 2^{10} - 2^8 + 1.$$

The Honda-Tate theorem gives us that there are $4339$ isogeny classes over $\F_2$ among Jacobians of genus $5$ curves defined over $\F_2$. 

Furthermore, Jacobian varieties of dimension $g = 5$ over $\F_2$ realize all eligible Newton polygons of height $2g = 10$. In other words, for any eligible Newton polygon $N$ of height $10$, there is a curve of genus $5$ defined over $\F_2$, which has $N$ as its Newton polygon. In Table \ref{table:genus5slopes}, for each of the three discussed classes of genus 5 curves, we mention the number of such curves occurring for indicated Newton polygons of height $10$.

\begin{table}[H]
\centering
\begin{tabular}{ |p{4.4cm}||p{2cm}|p{2cm}|p{2.4cm}|p{1cm}|  }
 \hline
 Newton polygon slopes&  $\Hyp_5(\F_2)$ & $\Trig_5(\F_2)$ & $\ComInt_5(\F_2)$ & Total\\
 \hline
 \small{$\left [0, 0, 0, 0, 0, 1, 1, 1, 1, 1\right ]$}  & 550 & 1417 &  1617 & 3584\\\hline
\small{$\left [0, 0, 0, 0, \frac{1}{2}, \frac{1}{2}, 1, 1, 1, 1\right ]$}&  156& 623& 868 & 1647 \\\hline
 \small{$\left [0, 0, 0, \frac{1}{2}, \frac{1}{2}, \frac{1}{2}, \frac{1}{2}, 1, 1, 1\right ]$} & 108& 404 & 672&1184\\\hline
 \small{$\left [0, 0, \frac{1}{3}, \frac{1}{3}, \frac{1}{3}, \frac{2}{3}, \frac{2}{3}, \frac{2}{3}, 1, 1\right ]$}& 32 & 122& 206& 360\\\hline
 \small{$\left [0, 0, \frac{1}{2}, \frac{1}{2}, \frac{1}{2}, \frac{1}{2}, \frac{1}{2}, \frac{1}{2}, 1, 1\right ]$}   & 88 & 80& 176 & 344\\\hline
 \small{$\left [0, \frac{1}{4}, \frac{1}{4}, \frac{1}{4}, \frac{1}{4}, \frac{3}{4}, \frac{3}{4}, \frac{3}{4}, \frac{3}{4}, 1\right ]$}&  0 & 64 & 88& 152\\\hline
 \small{$\left [0, \frac{1}{3}, \frac{1}{3}, \frac{1}{3}, \frac{1}{2}, \frac{1}{2}, \frac{2}{3}, \frac{2}{3}, \frac{2}{3}, 1\right ]$}&  48 &24& 40 & 112\\\hline
 \small{$\left [0, \frac{1}{2}, \frac{1}{2}, \frac{1}{2}, \frac{1}{2}, \frac{1}{2}, \frac{1}{2}, \frac{1}{2}, \frac{1}{2}, 1\right ]$}&  56 & 28 & 108& 192 \\\hline
 \small{$\left [\frac{1}{5}, \frac{1}{5}, \frac{1}{5}, \frac{1}{5}, \frac{1}{5}, \frac{4}{5}, \frac{4}{5}, \frac{4}{5}, \frac{4}{5}, \frac{4}{5}\right ]$}& 0 & 48 & 48 &  96\\\hline
\small{ $\left [\frac{1}{4}, \frac{1}{4}, \frac{1}{4}, \frac{1}{4}, \frac{1}{2}, \frac{1}{2}, \frac{3}{4}, \frac{3}{4}, \frac{3}{4}, \frac{3}{4}\right ]$}& 0&  8 & 24 &  32\\\hline
 \small{$\left [\frac{1}{3}, \frac{1}{3}, \frac{1}{3}, \frac{1}{2}, \frac{1}{2}, \frac{1}{2}, \frac{1}{2}, \frac{2}{3}, \frac{2}{3}, \frac{2}{3}\right ]$}&  16 & 18 & 26 &  60\\\hline
 \small{$\left [\frac{2}{5}, \frac{2}{5}, \frac{2}{5}, \frac{2}{5}, \frac{2}{5}, \frac{3}{5}, \frac{3}{5}, \frac{3}{5}, \frac{3}{5}, \frac{3}{5}\right ]$}& 8  & 4 & 4 & 16\\\hline
 \small{$\left [\frac{1}{2}, \frac{1}{2}, \frac{1}{2}, \frac{1}{2}, \frac{1}{2}, \frac{1}{2}, \frac{1}{2}, \frac{1}{2}, \frac{1}{2}, \frac{1}{2}\right ]$} & 8 & 14 & 28 & 50\\\hline
 \hline
 \end{tabular}
\caption{Numbers of curves for indicated Newton polygon}
\label{table:genus5slopes}
\end{table}

In Table \ref{table:genus5stackslopes}, we collect the stack counts for all three types of genus 5 curves over $\F_2$ possessing specified Newton polygon. 

\begin{table}[H]
\centering
\begin{tabular}{ |p{4.4cm}||p{2cm}|p{2cm}|p{2cm}|p{1.6cm}|  }
 \hline
 Newton polygon slopes&  $\mathcal{H}_5(\F_2)$ & $\mathcal{T}_5(\F_2)$ & $\mathcal{U}_5(\F_2)$ & $\mathcal{M}_5(\F_2)$\\
 \hline
 \small{$\left [0, 0, 0, 0, 0, 1, 1, 1, 1, 1\right ]$}  & 264 & 1405 &  1524 & 3193\\\hline
\small{$\left [0, 0, 0, 0, \frac{1}{2}, \frac{1}{2}, 1, 1, 1, 1\right ]$}&  76& 610& 838 & 1524 \\\hline
 \small{$\left [0, 0, 0, \frac{1}{2}, \frac{1}{2}, \frac{1}{2}, \frac{1}{2}, 1, 1, 1\right ]$} & 52& 402 & 574&1028\\\hline
 \small{$\left [0, 0, \frac{1}{3}, \frac{1}{3}, \frac{1}{3}, \frac{2}{3}, \frac{2}{3}, \frac{2}{3}, 1, 1\right ]$}& 16 & 122& 198& 336\\\hline
 \small{$\left [0, 0, \frac{1}{2}, \frac{1}{2}, \frac{1}{2}, \frac{1}{2}, \frac{1}{2}, \frac{1}{2}, 1, 1\right ]$}   & 40 & 78& 154 & 272\\\hline
 \small{$\left [0, \frac{1}{4}, \frac{1}{4}, \frac{1}{4}, \frac{1}{4}, \frac{3}{4}, \frac{3}{4}, \frac{3}{4}, \frac{3}{4}, 1\right ]$}&  0 & 64 & 88& 152\\\hline
 \small{$\left [0, \frac{1}{3}, \frac{1}{3}, \frac{1}{3}, \frac{1}{2}, \frac{1}{2}, \frac{2}{3}, \frac{2}{3}, \frac{2}{3}, 1\right ]$}&  24 &24& 32 & 80\\\hline
 \small{$\left [0, \frac{1}{2}, \frac{1}{2}, \frac{1}{2}, \frac{1}{2}, \frac{1}{2}, \frac{1}{2}, \frac{1}{2}, \frac{1}{2}, 1\right ]$}&  24 & 24 & 64& 112 \\\hline
 \small{$\left [\frac{1}{5}, \frac{1}{5}, \frac{1}{5}, \frac{1}{5}, \frac{1}{5}, \frac{4}{5}, \frac{4}{5}, \frac{4}{5}, \frac{4}{5}, \frac{4}{5}\right ]$}& 0 & 48 & 48 &  96\\\hline
\small{ $\left [\frac{1}{4}, \frac{1}{4}, \frac{1}{4}, \frac{1}{4}, \frac{1}{2}, \frac{1}{2}, \frac{3}{4}, \frac{3}{4}, \frac{3}{4}, \frac{3}{4}\right ]$}& 0&  8 & 24 &  32\\\hline
 \small{$\left [\frac{1}{3}, \frac{1}{3}, \frac{1}{3}, \frac{1}{2}, \frac{1}{2}, \frac{1}{2}, \frac{1}{2}, \frac{2}{3}, \frac{2}{3}, \frac{2}{3}\right ]$}&  8 & 14 & 18 &  40\\\hline
 \small{$\left [\frac{2}{5}, \frac{2}{5}, \frac{2}{5}, \frac{2}{5}, \frac{2}{5}, \frac{3}{5}, \frac{3}{5}, \frac{3}{5}, \frac{3}{5}, \frac{3}{5}\right ]$}& 4  & 4 & 4 & 12\\\hline
 \small{$\left [\frac{1}{2}, \frac{1}{2}, \frac{1}{2}, \frac{1}{2}, \frac{1}{2}, \frac{1}{2}, \frac{1}{2}, \frac{1}{2}, \frac{1}{2}, \frac{1}{2}\right ]$} & 4 & 14 & 18 & 36\\\hline
 \hline
 \end{tabular}
\caption{Stack counts for indicated Newton polygon}
\label{table:genus5stackslopes}
\end{table}

\begin{exmp}
The non-hyperelliptic non-trigonal curve $C_{3103}^{\mathrm{ComInt}}$ given in $\P^4$ by 
\begin{align*}
 C_{3103}^{\mathrm{ComInt}}: \begin{cases}
Y^2 + XZ + YZ = 0\\
XY + XZ + YT + ZT + XU + ZU + U^2 = 0\\
XY + XZ + YZ + Z^2 + XT + ZT + T^2 + YU + ZU = 0
\end{cases}
\end{align*}
is a unique curve with $9$ $\F_2$-points, which is the maximal number of $\F_2$-points among all genus $5$ curves over $\F_2$. The maximum number of $\F_2$-points among hyperelliptic curves (of genus $5$ over $\F_2$) is $6$, and there are $44$ such curves with this property. Among trigonal curves (of genus $5$ over $\F_2$), the maximum number of $\F_2$-points is $8$, and there are $6$ such trigonal curves. This agrees with a result by Faber and Grantham \cite{fabergrantham}.

Furthermore, there are $308$ curves of genus 5 over $\F_2$ without any $\F_2$-point; $44$ of them are hyperelliptic, $23$ of them are trigonal, and the remaining $241$ are in $\ComInt_5(\F_2)$.
\end{exmp}

\begin{exmp}
There are $161$ isogeny classes over $\F_2$ so that in each of them, we can find (pairwise non-isomorphic) Jacobians defined by all three types of genus $5$ curves. For example, the curves $C^{\mathrm{Hyp}}_{2}$ (hyperelliptic), $C^{\mathrm{Trig}}_{1}$ (trigonal), and $C^{\mathrm{ComInt}}_{124}$ (non-hyperelliptic non-trigonal) \allowbreak \text{given by} \allowbreak

$$C^{\mathrm{Hyp}}_{2}: y^2 + y + x^{11} + x^{10} + x^8 + x^7 + x^6 + x^5 + x^4 + x^3 + x^2 + x = 0,$$

\begin{align*}
C^{\mathrm{Trig}}_{1}:  X^4Y +& X^3Y^2 + XY^4 + X^3YZ + X^2Y^2Z + XY^3Z + \\ +& X^3Z^2 + X^2YZ^2 + Y^3Z^2 + XYZ^3 + Y^2Z^3 = 0 \text{ in } \P^2, \text{ and}
\end{align*}
$$C^{\mathrm{ComInt}}_{124}: \begin{cases}
Y^2 + YZ + Z^2 + XT + ZT = 0\\
XT + XU + YU + ZU = 0\\
X^2 + XY + Y^2 + XZ + YZ + XU + YU + ZU + TU + U^2 = 0
\end{cases} \text{ in } \P^4,$$
define Jacobians isogenous over $\F_2$. That is because for each curve $C$ of them we have that $|C(\F_2)| = 5, |C(\F_{2^2})| = 9, |C(\F_{2^3})| =  11, |C(\F_{2^4})| =  33,$ and $|C(\F_{2^5})| =  25$.
\end{exmp}

\begin{exmp}

In Table \ref{table:heauto}, we mention the sizes of $\F_2$-automorphism groups occurring for hyperelliptic curves of genus $5$ over $\F_2$ and the numbers of such curves $C$ with the indicated size $|\Aut_{\F_2}(C)|$.

\begin{table}[H]
\begin{center}
\begin{tabular}{ |c||c|c|c|c|  }
 \hline
 $|\Aut_{\F_2}(C)|$  & 2 & 4 &  6 & 12\\\hline
$\# C \text{ in } \Hyp_5(\F_2)$ &  983 & 76& 7 & 4 \\\hline
 \end{tabular}
\caption{Sizes of $\F_2$-automorphism groups of curves in $\Hyp_5(\F_2)$}
\label{table:heauto}
\end{center}
\vspace{-6mm}
\end{table}
\noindent
%Among the hyperelliptic curves $C$ of genus 5 over $\F_2$, there are $983$ of them with the $\F_2$-automorphism group of size $2$, $76$ of them with $|\Aut_{\F_2}(C)| = 4$, $7$ of them with $|\Aut_{\F_2}(C)| = 6$, and $4$ of them with $|\Aut_{\F_2}(C)| = 12$. No other size of the $\F_2$-automorphism group occurs for this class of genus $5$ curves over $\F_2$. 
The following curves
\begin{align*}
C^{\mathrm{Hyp}}_{144}: &  y^2 + (x + 1)^2x^2y +  x^{11} + x^9 + x^8 + x^5 + x^3 + x^2  + x + 1 = 0\\
C^{\mathrm{Hyp}}_{145}: &  y^2 + (x + 1)^2x^2y + x^{11} + x^{10} +  x^3 +  x = 0\\
C^{\mathrm{Hyp}}_{457}: &  y^2  + (x^3 + x^2 + 1)(x^3 + x + 1)y + x^{10} + x^6 + x^4 + x^3  = 0\\
C^{\mathrm{Hyp}}_{458}: &  y^2 + (x^3 + x^2 + 1)(x^3 + x + 1)y + x^{12} + x^{10} + x^8 + x^7 + x^5 + x^3 + 1 = 0
\end{align*}
 with indices $144, 145, 457, 458$ in our list $\Hyp_5(\F_2)$ are the ones with $\F_2$-automorphism group of size $12$, the maximal one.
\end{exmp}

\begin{exmp}
As one can see in Table \ref{table:genus5slopes}, there are $8$ supersingular hyperelliptic curves of genus $5$ over $\F_2$. Their indices in our list $\Hyp_5(\F_2)$ are $0, 5, 9, 11, 15, 20, 25,$ and $31$. They all have $\F_2$-automorphism group of size $2$.
\end{exmp}

\begin{exmp} Let $\F_q$ be a finite field of cardinality $q = p^r$ for $r\in \Z_{> 0}$ and a prime number $p\in \Z_{>0}$, let $g\in \Z_{\geq 2}$, and let $\lambda = [1^{\lambda_1}, \ldots, \nu^{\lambda_{\nu}}]$ be a partition of an integer $m\in \Z_{\geq 1}$ with non-negative integers $\lambda_1, \ldots, \lambda_{\nu}$, such that $|\lambda|:=\sum_{i = 1}^{\nu}i\lambda_i = m$. In \cite{jonas}, Bergstr\"om made $\mathbb{S}_n$-equivariant counts of points defined over finite fields of the moduli space $\mathcal{H}_{g, n}$ of $n$-pointed smooth hyperelliptic curves of genus $g$ over $\F_q$. The counts depend on the numbers $$a_{\lambda}|_g:= \sum_{C \in \Hyp_g(\F_q)}\frac{1}{|\Aut_{\F_q}(C)|}\prod_{i = 1}^{\nu}a_i(C)^{\lambda_i}, $$ where $a_i(C) = q^i + 1 -|C(\F_{q^i})|$, for $i \in \Z_{\geq 1}$. In that paper, for arbitrary $q$ and $g$ as above, some explicit formulas were mentioned: $a_{[2]}|_g = (-1)^g - q^{2g}$, $a_{[1^2]}|_g = -1 + q^{2g},$ $$a_{[1^2, 2]}|_g = -\frac{q^{2g + 2} - 1}{q + 1} - q^{2g} + \frac{1}{2}g(q^3 + q - 2) + \frac{1}{2}\begin{cases}
2q & \text{ if } g\equiv 0 \mod 2 \\ 
q^3 - q - 2 & \text{ if } g\equiv 1 \mod 2 
\end{cases}, $$ and $a_{\lambda}|_g = 0$, if $|\lambda|$ is odd. Using our data, for $q = 2$ and $g = 5$, we computed the sums from the definition of $a_{\lambda}|_g$ for some $\lambda$, and the outcomes agree with the mentioned formulas: $$a_{[2]}|_5 = -1025, \quad a_{[1^2]}|_5 = 1023, \quad a_{[1^2, 2]}|_5 = -2367, \quad a_{[1^2, 3]}|_5 = a_{[3, 4^2]}|_5 = a_{[1, 2, 5^2]}|_5 = 0.$$

\end{exmp}

\begin{exmp}

%Among trigonal curves $C$ of genus 5 over $\F_2$, there are $63$ of them with $|\Aut_{\F_2}(C)| = 2$, $7$ of them with $|\Aut_{\F_2}(C)| = 3$, and one of them with $|\Aut_{\F_2}(C)| = 6$, while the remaining $2783$ have trivial automorphism group.
In Table \ref{table:trigauto}, we mention the sizes of $\F_2$-automorphism groups occurring for trigonal curves of genus $5$ over $\F_2$ and the numbers of such curves $C$ with the indicated size $|\Aut_{\F_2}(C)|$.
\begin{table}[H]
\begin{center}
\begin{tabular}{ |c||c|c|c|c|  }
 \hline
 $|\Aut_{\F_2}(C)|$  & 1 & 2 &  3 & 6\\\hline
$\# C \text{ in } \Trig_5(\F_2)$ &  2783 & 63& 7 & 1 \\\hline
 \end{tabular}
\caption{Sizes of $\F_2$-automorphism groups of curves in $\Trig_5(\F_2)$}
\label{table:trigauto}
\end{center}
\vspace{-6mm}
\end{table}
\noindent
The curve $C^{\mathrm{Trig}}_{980}$ given by the equation in $\P^2$
\begin{align*}
C^{\mathrm{Trig}}_{980}:  X^5 +& Y^5 + X^4Z + X^3YZ + XY^3Z + Y^4Z + X^3Z^2 +\\
+& X^2YZ^2 + XY^2Z^2 + Y^3Z^2 + X^2Z^3 + XYZ^3 + Y^2Z^3 = 0,
\end{align*}
which has index $980$ in our list $\Trig_5(\F_2)$, is the one with $|\Aut_{\F_2}(C^{\mathrm{Trig}}_{980})| = 6$. 
\end{exmp}

\begin{exmp}
All $14$ supersingular trigonal curves have a trivial $\F_2$-automorphism group. Their indices in our list of trigonal genus 5 curves over $\F_2$ are $103, 612, 1044, 1059, 1144, \allowbreak 1150, \allowbreak 1292, 1384, 1667, 1684, 1728, 2387, 2460,$ and $2502$. 
\end{exmp}

\begin{exmp}
%Lastly, among non-hyperelliptic non-trigonal curves $C$ of genus 5 over $\F_2$, there are $490$ of them with $|\Aut_{\F_2}(C)| = 2$, $3$ of them with $|\Aut_{\F_2}(C)| = 3$, $60$ of them with $|\Aut_{\F_2}(C)| = 4$, $4$ of them with $|\Aut_{\F_2}(C)| = 6$, $24$ of them with $|\Aut_{\F_2}(C)| = 8$, $2$ of them with $|\Aut_{\F_2}(C)| = 12$, $2$ of them with $|\Aut_{\F_2}(C)| = 16$ and one of them with $|\Aut_{\F_2}(C)| = 24$, while the remaining $3319$ have trivial automorphism group.
In Table \ref{table:nonhenontrigauto}, we mention the sizes of $\F_2$-automorphism groups occurring for non-hyperelliptic non-trigonal curves of genus $5$ over $\F_2$ and the numbers of such curves $C$ with the indicated size $|\Aut_{\F_2}(C)|$.

\begin{table}[H]
\begin{center}
\begin{tabular}{ |c||c|c|c|c|c|c|c|c|c|  }
 \hline
 $|\Aut_{\F_2}(C)|$  & 1 & 2 &  3 & 4& 6& 8& 12& 16& 24\\\hline
$\# C \text{ in } \ComInt_5(\F_2)$ &  3319 & 490 & 3 & 60 & 4& 24& 2& 2& 1 \\\hline
 \end{tabular}
\caption{Sizes of $\F_2$-automorphism groups of curves in $\ComInt_5(\F_2)$}
\label{table:nonhenontrigauto}
\end{center}
\vspace{-6mm}
\end{table}
\noindent
The curve with index $3491$ in our list of non-hyperelliptic non-trigonal curves, \allowbreak{given in } $\P^4$ by\allowbreak $$ C^{\mathrm{ComInt}}_{3491}: \begin{cases}
Y^2 + XZ + YZ = 0\\
Y^2 + XZ + YZ + Z^2 + YT + T^2 + XU + YU + ZU = 0\\
X^2 + Y^2 + XZ + YT + T^2 + YU + U^2 = 0  
\end{cases}$$
is the (only) one with $\F_2$-automorphism group of size 24; this is the largest possible size among all $\F_2$-automorphism groups of genus 5 curves defined over $\F_2$.
\end{exmp}

\begin{exmp}
The indices of the $28$ non-hyperelliptic non-trigonal supersingular curves of genus 5 over $\F_2$ are $4, 369, 375, 409, 487, 556, 1373, 1381, 1548, 2587, 2588, 2608, 2625, 2811,\allowbreak 2812, 2817, 2820, 2920, 2928, 2935, 3091, 3093, 3094, 3096, 3098, 3099, 3103$, and  $3756$. Four of them have automorphism group of size $8$, six of them of size $4$, and four of them of size $2$, while the others have a trivial automorphism group.  
\end{exmp}

\begin{exmp}
In the first step of Algorithm 3 for determining the set $\ComInt_5(\F_2)$ of non-hyperelliptic non-trigonal curves of genus 5 over $\F_2$, we mentioned that we found models for curves $C$ in $\ComInt_5(\F_2)$, so that one quadric on which such a curve lies is always one of the quadrics corresponding to the $15$-tuples $P_1, P_2, P_3$, or $P_4$. \allowbreak \text{Explicitly, we have} \allowbreak
\begin{align*}
q_{P_1} &=  X^2 + XZ + YZ + XT + ZT + TU, \\
q_{P_2} &= XY + Y^2 + Z^2 + YT + ZT, \\
q_{P_3} &=  Y^2 + YZ + Z^2 + XT + ZT, \\
q_{P_4} &= Y^2 + XZ + YZ. %\\
%q_{P_6} &= XY + Y^2 + Z^2 + YT + ZT.
\end{align*}
In the union $\bigcup_{i \in \{1, 2, 3, 4\}} \Orbit(q_{P_i})$  there are $32116$ elements, so in the set difference between all quadratic polynomials and this union, there are $651$ elements. It can be checked that all of them are reducible over $\Fbar_2$ and that they split into three orbits as was mentioned in the description of Algorithm 3. Elements of the mentioned orbits, as well as other relevant data we used can be found in codes available on the mentioned \textsc{GitHub} page.

For completeness, let us mention the remaining three (reducible over $\Fbar_2$) quadratic polynomials we obtained in the first step of Algorithm 3. They are 
\begin{align*}
q_{P_5} &= Y^2, \\
q_{P_6} &= X^2 + XY, \\
q_{P_7} &= X^2 + XY + XT + Y^2 + YZ + YT + Z^2 + ZT + T^2 \\
&=(X+\zeta_2 Y +Z+(\zeta_2+1)T)(X+ (\zeta_2+1)Y +Z+\zeta_2T),
\end{align*} where $\zeta_2 \in \Fbar_2$, so that $\zeta_2^2 + \zeta_2 + 1 = 0$. 
\end{exmp}

\normalsize

\end{document}